\newcommand{\abs}[1]{{\left|#1\right|}}
\newcommand{\ov}[1]{{\overline{#1}}}
\newcommand{\norma}[1]{{\left\Vert#1\right\Vert}}
\newcommand{\F}{\mathcal{F}}
\newcommand{\eps}{\varepsilon}
\newcommand{\parzder}[2]{{\frac{\partial #1}{\partial #2}}}
\newcommand{\measurerestr}{%
  \,\raisebox{-.127ex}{\reflectbox{\rotatebox[origin=br]{-90}{$\lnot$}}}\,%
}
\def\XXint#1#2#3{{\setbox0=\hbox{$#1{#2#3}{\int}$}
    \vcenter{\hbox{$#2#3$}}\kern-.5\wd0}}
\theoremstyle{definition}
\newtheorem{definizione}{Definition}[section]
\theoremstyle{plain}
\newtheorem{teorema}{Theorem}[section]
\newtheorem{lemma}[teorema]{Lemma}
\newtheorem{prop}[teorema]{Proposition}
\newtheorem{corollario}[teorema]{Corollary}
\theoremstyle{definition}
\newtheorem{esempio}{Example}[section]
\newtheorem{oss}[esempio]{Remark}
\DeclareMathOperator{\R}{\mathbb{R}}
\DeclareMathOperator{\supp}{\text{supp}}
\newcommand{\myfootnote}[2]{\begingroup
	\def\@makefnmark{}%
	\addtocounter{footnote}{-1}%
	\footnote{\textbf{#1} #2}%
	\endgroup}
\newcommand{\BV}{\text{BV}}
\numberwithin{equation}{section}
\title{On the gradient rearrangement of functions}
\author{Vincenzo Amato, Andrea Gentile, Carlo Nitsch, Cristina Trombetti}
\date{}
\begin{document}

\maketitle

\begin{abstract}

In this paper, we introduce a symmetrization technique for the gradient of a $\BV$ function, which separates its absolutely continuous part from its singular part (sum of the jump and the Cantorian part). In particular, we prove an $\text{\emph{L}}^{\text{1}}$ comparison between the function and its symmetrized. 
     
Furthermore, we apply this result to obtain Saint-Venant type inequalities for some geometric functionals.

    \textsc{MSC 2020:} 26A45, 35A23, 35B45.\\
    
    \textsc{Keywords:} Rearrangements, Functions of bounded variation, comparison results.

\end{abstract}

\maketitle

\section{Introduction}
\nocite{Alt_Caffarelli,ALT_optimization_problem_with_prescribed_rearrangement,Alvino_Trombetti_costanti_maggiorazione,AG_rearrangement_gradient_Sobolev,ADM,Ambrosio_Fusco_Pallara,BdPNT,Bucur_Buttazzo_Nitsch,Cianchi_Lq_norm,CianchiFusco_BV,De_Giorgi_Carriero_Leaci,Torsional_Rigidity_Diaz_Weinstein,Evans_Gariepy,Ferone_Posteraro,Ferone_Posteraro_Volpicelli,Fleming_Rishel,GN,book_kahowl,kesavan,Mercaldo_remark_comparison_Hamilton_Jacobi,Polya_Szego,Torsional_rigidity_Polya_Weinstein,Talenti:compendio}
Let $\Omega$ be a bounded open set  of $\R^n$ with finite perimeter (see section \ref{section_preliminary} for its definition) and let us denote, as in \cite{BdPNT}, by
\[
    \text{BV}_0(\Omega):= \left\{u \in \text{BV}(\R^n) :\, u\equiv 0 \text{ in }\R^n \setminus \Omega \right\}.
\]

The aim of the present paper is to define a  symmetrization of the distributional gradient of a $BV$ function.

The interest in this topic essentially derives from the work \cite{GN} where the authors deal with the following problems involving Hamilton-Jacobi equation

\begin{subequations}\label{HJ}
    \noindent\begin{minipage}{0.48\textwidth}
\begin{equation}
\begin{cases}
                    H(\nabla u) = f & \text{a.e. in } \Omega \\
                    u = 0 & \text{on } \partial \Omega
                \end{cases}
                \label{HJ_u}
\end{equation}
    \end{minipage}
    \begin{minipage}{0.52\textwidth}
\begin{equation}
 \label{HJ_v}
 \quad
                \begin{cases}
                    K(\abs{\nabla v}) = f_{\sharp} & \text{a.e. in } \Omega^{\sharp} \\
                    v = 0 & \text{on } \partial \Omega^{\sharp}
                \end{cases}
\end{equation}
    \end{minipage}\vskip1em
\end{subequations}
where $\Omega ^{\sharp }$ is the ball centered at the origin with the same measure as $\Omega$ (in the sequel just centered ball), $H: \R^n \to \R$ and $K:\R \to \R$ are measurable functions,  $u,v \in W^{1,p}_0$ and $f_{\sharp}$ is the increasing rearrangement of $f$ (see Section \ref{section_preliminary} for its definition). 

In particular, under suitable assumptions on $H$ and $K$, it is proven (\cite[Theorem 2.2]{GN}) that whenever $u,v$ are solutions to \eqref{HJ_u} and \eqref{HJ_v} respectively, then
\begin{equation*}
    \norma{u}_{L^1(\Omega)} \leq \norma{v}_{L^1(\Omega^{\sharp})}.
\end{equation*}

In \cite{ALT_optimization_problem_with_prescribed_rearrangement} the authors study the problem of maximization of the $L^q$ norm among functions with prescribed gradient rearrangement. 
Precisely, the following cases are considered
\begin{itemize}
        \item $1 \leq q \leq \frac{np}{n-p}$ if $p<n$,
        
        \item $1 \leq q <+\infty$ if $p = n$,
        
        \item $1 \leq q \leq + \infty$ if $p>n$,
        
    \end{itemize}
and for a fixed $\varphi = \varphi^{*} \in L^p(0,\abs{\Omega})$, they define 
\begin{equation*}
        I(\Omega) : =
        \sup \left \{ \norma{v}_{L^q} \, : \; \;
         \begin{aligned}
            & \abs{\nabla v} \leq f \; \text{a.e. in }\Omega, \\
            & v \in W_0^{1,p}(\Omega) \\
            & f \geq 0, \, f^* = \varphi^*
        \end{aligned}
        \right\},
\end{equation*}
and they proved the following
\begin{teorema}{\cite[Theorem 3.1]{ALT_optimization_problem_with_prescribed_rearrangement}}
    \label{3.1alt}
    Let $\Omega$ be a bounded open set in $\R^n$,  let $\Omega^{\sharp}$ be the centered ball, let $R$ be its radius and let $p,q,\varphi$ be as defined above.
    
    Then, there exist $v, g$ spherically symmetric on $\Omega^{\sharp}$ such that $g^* = \varphi$, $I(\Omega^{\sharp}) = \norma{v}_{L^q}$, and thus
    \[
        v \in W_0^{1,p}(\Omega), \, v \geq 0, \, \abs{\nabla v} = g \qquad \text{a.e. in }\Omega^{\sharp}.
    \]
    Furthermore $I(\Omega^{\sharp}) \geq I(\Omega)$ for all open sets $\Omega$ in $\R^n$ with $\lvert \Omega^{\sharp} \rvert = \abs{\Omega}$.
\end{teorema}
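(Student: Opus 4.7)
My plan is to establish the comparison $I(\Omega^{\sharp})\ge I(\Omega)$ by Schwarz symmetrization, and to derive existence and spherical symmetry of the optimizer as a consequence. Fix any admissible pair $(v,f)$ for $I(\Omega)$, i.e.\ $v\in W_0^{1,p}(\Omega)$, $\abs{\nabla v}\le f$ a.e., $f\ge 0$ and $f^*=\varphi^*$. Replacing $v$ by $\abs{v}$ one may assume $v\ge 0$; let $v^{\sharp}$ denote the spherically decreasing rearrangement of $v$ on $\Omega^{\sharp}$. By equimeasurability $\norma{v^{\sharp}}_{L^q(\Omega^{\sharp})}=\norma{v}_{L^q(\Omega)}$, and the comparison reduces to producing a nonnegative radial $g$ on $\Omega^{\sharp}$ with $g^*=\varphi^*$ and $\abs{\nabla v^{\sharp}}\le g$ a.e., so that $(v^{\sharp},g)$ is admissible on $\Omega^{\sharp}$.

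The heart of the argument is to show the pointwise rearrangement bound $(\abs{\nabla v^{\sharp}})^{*}\le\varphi^{*}$ on $[0,\abs{\Omega}]$, after which a standard rearrangement-matching lemma produces $g\ge\abs{\nabla v^{\sharp}}$ with $g^{*}=\varphi^{*}$. Writing $\mu(t)=\abs{\{v>t\}}$ and $v^{\sharp}(x)=v^{*}(\omega_{n}\abs{x}^{n})$, a direct computation yields
\[
\abs{\nabla v^{\sharp}}(x)=\frac{n\omega_{n}^{1/n}\mu(t)^{(n-1)/n}}{-\mu'(t)}\qquad\text{at the radius }r\text{ with }\omega_{n}r^{n}=\mu(t).
\]
Combining the coarea identity $-\mu'(t)=\int_{\{v=t\}}\abs{\nabla v}^{-1}\,d\mathcal{H}^{n-1}$, the isoperimetric inequality $P(\{v>t\})\ge n\omega_{n}^{1/n}\mu(t)^{(n-1)/n}$, and Hölder's inequality on $\{v=t\}$ controls the right-hand side by level-set averages of $\abs{\nabla v}\le f$; applying Hardy--Littlewood level-by-level replaces these averages by quantities controlled by $f^{*}=\varphi^{*}$, yielding the sought pointwise estimate.

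For the existence of a radial optimizer on $\Omega^{\sharp}$ I would then invoke the above with $\Omega=\Omega^{\sharp}$: radial competitors suffice, and a maximizing sequence is uniformly bounded in $W_0^{1,p}(\Omega^{\sharp})$ since $\norma{\nabla v_{k}}_{L^{p}}\le\norma{\varphi}_{L^{p}}$. The prescribed range of $q$ is exactly the one in which the embedding $W_0^{1,p}(\Omega^{\sharp})\hookrightarrow L^{q}(\Omega^{\sharp})$ is continuous (with the Morrey embedding invoked when $p>n$ and $q=\infty$), so weak $W^{1,p}$-convergence yields convergence of the $L^{q}$-norms. A maximality/perturbation argument then enforces $\abs{\nabla v}=g$ a.e.\ in the limit: on any set of positive measure where $\abs{\nabla v}<g$ one could steepen the radial profile and strictly increase $\norma{v}_{L^{q}}$. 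The chief obstacle I anticipate is the promotion from the classical integral Pólya--Szegő bound to the pointwise rearrangement estimate $(\abs{\nabla v^{\sharp}})^{*}\le\varphi^{*}$: this requires a delicate level-set-by-level-set analysis where the Cauchy--Schwarz/Hölder residue is re-expressed through coarea and rearranged via Hardy--Littlewood, with care to respect the prescribed rearrangement $\varphi^{*}$ throughout.
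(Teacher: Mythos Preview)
The paper does not prove this theorem: it is quoted verbatim from \cite{ALT_optimization_problem_with_prescribed_rearrangement} (note the citation in the theorem header) and no proof is supplied here, so there is no ``paper's own proof'' to compare against.

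That said, your strategy diverges from the one in the cited source and from the closely related arguments that \emph{are} carried out in this paper (Theorem~\ref{Giarrusso_Nunziante} and Proposition~\ref{intermedio}). There one does \emph{not} Schwarz-symmetrize the competitor $v$ and then try to dominate $\lvert\nabla v^{\sharp}\rvert$; instead one writes down directly the radial candidate
\[
w(x)=\int_{\omega_n\lvert x\rvert^{n}}^{\lvert\Omega\rvert}\frac{\varphi_{*}(\tau)}{n\omega_n^{1/n}\tau^{1-1/n}}\,d\tau,
\qquad \lvert\nabla w\rvert=\varphi_{\sharp},
\]
and proves the \emph{pointwise} comparison $v^{*}(s)\le w^{*}(s)$ via coarea plus isoperimetry on the superlevel sets of $v$ (exactly the mechanism of Proposition~\ref{intermedio}). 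This immediately yields $\lVert v\rVert_{L^{q}}\le\lVert w\rVert_{L^{q}}$ for every $q$, and the existence and identification of the optimizer on $\Omega^{\sharp}$ come for free as $w$ itself, with $g=\varphi_{\sharp}$.

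Your route hinges on the pointwise bound $(\lvert\nabla v^{\sharp}\rvert)^{*}\le\varphi^{*}$, which you correctly flag as the main obstacle. The level-set computation you sketch yields only an \emph{average} control of $\lvert\nabla v^{\sharp}\rvert$ over each sphere in terms of averages of $\lvert\nabla v\rvert$ (or $f$) over the corresponding level set of $v$; passing from such averaged bounds to a pointwise inequality between decreasing rearrangements is precisely where the argument breaks down, and P\'olya--Szeg\H{o} in its sharp form gives only a weak (Hardy--Littlewood--P\'olya) majorization, not the pointwise comparison you need. In short: the detour through $v^{\sharp}$ introduces a genuine gap, whereas comparing $v^{*}$ directly with the explicit $w^{*}$ avoids it entirely.
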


In \cite{Cianchi_Lq_norm} the author proved a representation formula for the function $g$, the existence of which was proved in Theorem \ref{3.1alt}.

Let us also mention that in \cite{Ferone_Posteraro, Ferone_Posteraro_Volpicelli} the authors studied the optimization of the norm of a Sobolev function in the class of functions with prescribed rearrangement of the gradient.

The case of a Sobolev non-zero trace function for $q=1$ is instead studied in \cite{AG_rearrangement_gradient_Sobolev}.

The literature concerning rearrangements in the spaces $W^{1,p}$ is exhaustive, whereas, to our knowledge, results on $\text{BV}$ functions are rarer. One of the most relevant papers in this framework is \cite{CianchiFusco_BV} where the authors extend the validity of Polya-Szeg\"o inequality to $\text{BV}$ functions. More specifically, they proved that if $u \in \text{BV}(\R^n)$, then its Schwarz rearrangement $u^{\sharp}$ (see Section \ref{section_preliminary} for its definition) belongs to $\text{BV}(\R^n)$ and it holds \cite[Theorem 1.3]{CianchiFusco_BV}
\begin{equation}
    \label{polya_szego_cianchi_fusco}
    \begin{split}
        \lvert D u^{\sharp} \rvert (\R^n) &\leq \lvert D u \rvert (\R^n) \\
        \lvert D^{\mathrm{s}} u^{\sharp} \rvert (\R^n) & \leq \lvert D^{\mathrm{s}} u \rvert (\R^n) \\
        \lvert D^{\mathrm{j}} u^{\sharp} \rvert (\R^n) & \leq \lvert D^{\mathrm{j}} u \rvert (\R^n)
    \end{split}
\end{equation}
where $D^s$ and $D^j$ denote respectively the singular and the jump part of the gradient (see \cite{CianchiFusco_BV} for their definitions). There is no analogue of \eqref{polya_szego_cianchi_fusco} for the absolutely continuous and the Cantorian part of the gradient, i.e. in the symmetrization procedure the total variation of $D^a$ and $D^c$ can be increased, as shown in the example given in \cite{CianchiFusco_BV}.

In this paper, we want to introduce a symmetrization that keeps the absolutely continuous part separate from the singular part (sum of jump and Cantorian part) of the gradient. To be more precise, we define the radial function $u^{\star} \in W^{1,1}(\Omega^{\sharp})\cap \text{BV}_0(\Omega^{\sharp}) \cap L^{\infty}(\Omega^\sharp)$ such that
  \begin{equation}
        \label{eq_che_risolve_u_picche}
        \begin{cases}
            \lvert \nabla u^{\star} \rvert(x) = \abs{\nabla^a u}_{\sharp}(x) & \text{a.e. in }\Omega^{\sharp} \\[1ex]
            u^\star (x)=  \cfrac {1}{\displaystyle{ \text{Per}(\Omega^\sharp)}} \lvert D^s u \rvert (\R^n)   &\text{ on } \partial \Omega^{\sharp}
        \end{cases}
        ,
    \end{equation}
where $\nabla^a u$ and $D^s u$ will be defined in Section \ref{section_preliminary}.

The main theorem can be stated as follows.
\begin{teorema}
    \label{Teorema_che_scriveremo}
    Let $\Omega \subset \R^n$ be a bounded open set with finite perimeter and let $\Omega^\sharp$ be the centered ball. Assume that $u$ is a non-negative function belonging to $ \text{BV}_0(\Omega)$  and assume that $u^\star$ is defined as in \eqref{eq_che_risolve_u_picche}, then
    \begin{equation*}
        \norma{u}_{L^1(\Omega)} \leq \norma{u^{\star}}_{L^1(\Omega^{\sharp})}.
    \end{equation*}
\end{teorema}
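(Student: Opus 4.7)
The plan is to adapt a Talenti-type comparison at the level of distribution functions, tailored to the BV setting and to the specific structure of $u^\star$. Set $\mu(t):=|\{u>t\}|$ and $\mu^\star(t):=|\{u^\star>t\}|$. By Cavalieri's principle
\begin{equation*}
\|u\|_{L^1(\Omega)}=\int_0^{+\infty}\mu(t)\,dt\quad\text{and}\quad\|u^\star\|_{L^1(\Omega^\sharp)}=\int_0^{+\infty}\mu^\star(t)\,dt,
\end{equation*}
so it suffices to establish the pointwise inequality $\mu(t)\le\mu^\star(t)$ for a.e. $t>0$. Writing $c:=|D^s u|(\R^n)/\text{Per}(\Omega^\sharp)$ for the interior trace of $u^\star$ on $\partial\Omega^\sharp$, radial monotonicity gives $u^\star\ge c$ on $\Omega^\sharp$, whence $\mu^\star(t)=|\Omega|$ on $[0,c)$ and the required bound is immediate on this range.

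For $t\ge c$, $\{u^\star>t\}$ is a ball $B_{r(t)}\subset\Omega^\sharp$, and differentiating the radial profile of $u^\star$ obtained from \eqref{eq_che_risolve_u_picche} yields the explicit ODE
\begin{equation*}
-(\mu^\star)'(t)=\frac{n\,\omega_n^{1/n}\,\mu^\star(t)^{(n-1)/n}}{F(r(t))},
\end{equation*}
where $F(|x|):=\abs{\nabla^a u}_\sharp(x)$ is the radial profile of the rearranged absolutely continuous gradient. In other words, $\mu^\star$ saturates the isoperimetric inequality with ``speed'' exactly dictated by the rearrangement of $|\nabla^a u|$.

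The core step is to derive the analogous integral/differential inequality for $\mu$. Combining the Fleming--Rishel coarea formula $|Du|(\R^n)=\int_0^{+\infty}P(\{u>t\})\,dt$ with the isoperimetric inequality $P(\{u>t\})\ge n\omega_n^{1/n}\mu(t)^{(n-1)/n}$, Vol'pert's decomposition of the reduced boundary of each super-level set into a ``smooth'' piece (contributing to $|D^a u|+|D^c u|$) and a ``jump'' piece (contributing to $|D^j u|$), and a Hardy--Littlewood rearrangement step for the absolutely continuous contribution, I expect to arrive at an inequality of the form
\begin{equation*}
\int_s^{+\infty}n\,\omega_n^{1/n}\mu(t)^{(n-1)/n}\,dt\;\le\;\int_0^{\mu(s)}F\!\big((\sigma/\omega_n)^{1/n}\big)\,d\sigma\;+\;|D^s u|(\R^n)
\end{equation*}
for every $s\ge 0$. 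The singular mass $|D^s u|(\R^n)$ on the right is precisely what is paid in $u^\star$ by the boundary jump $c\,\text{Per}(\Omega^\sharp)$, so the same relation holds with equality for $\mu^\star$. A Gronwall-type comparison of these two relations then forces $\mu\le\mu^\star$ everywhere, and an integration in $t$ concludes.

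The main obstacle will be this last integral estimate for $\mu$: making the Vol'pert-type splitting of $P(\{u>t\})$ quantitative enough that the absolutely continuous contribution fits Talenti's Hardy--Littlewood step, while the singular contribution, once integrated in $t$, is correctly bounded by the total singular mass $|D^s u|(\R^n)$. Additional care is required when $\mu$ itself has jumps, corresponding to plateaus of $u$ generated by the Cantorian part of $Du$.
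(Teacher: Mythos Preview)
Your plan hinges on the pointwise bound $\mu(t)\le\mu^\star(t)$, but this is false in general. Take $n\ge 2$, $\Omega=B_R$, and $u=h\,\chi_{B_\rho}$ with $0<\rho<R$, $h>0$. Then $\nabla^{\mathrm a}u=0$ and $\abs{D^{\mathrm s}u}(\R^n)=h\,n\omega_n\rho^{n-1}$, so by \eqref{eq_che_risolve_u_picche} $u^\star\equiv c:=h(\rho/R)^{n-1}$ on $B_R$. For $t\in[c,h)$ one has $\mu(t)=\omega_n\rho^n>0=\mu^\star(t)$, so the pointwise comparison fails, even though $\norma{u}_{L^1}=h\omega_n\rho^n\le h\omega_n\rho^{n-1}R=\norma{u^\star}_{L^1}$ does hold.

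The concrete misstep is your ``Hardy--Littlewood rearrangement step''. From Fleming--Rishel and isoperimetry you do get
\[
\int_s^{+\infty}n\,\omega_n^{1/n}\mu(t)^{(n-1)/n}\,dt\ \le\ \int_{\{u>s\}}\abs{\nabla^{\mathrm a}u}\,dx+\abs{D^{\mathrm s}u}(\R^n),
\]
but $F$ in your formula is the \emph{increasing} rearrangement, and the bathtub principle gives $\int_{\{u>s\}}\abs{\nabla^{\mathrm a}u}\,dx\ge\int_0^{\mu(s)}\abs{\nabla^{\mathrm a}u}_*(\sigma)\,d\sigma$, not $\le$. The inequality you ``expect'' therefore points the wrong way, and no Gronwall comparison can follow. (Note also that $\mu^\star$ satisfies your relation with $0$, not with $\abs{D^{\mathrm s}u}(\R^n)$, on the right for $t\ge c$, so the two profiles are not even governed by the same differential relation.)

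The paper circumvents pointwise comparison. It first proves $u^*(s)\le v(s)$ for an auxiliary $v$ built from the measure $\sigma$ associated with the truncations $(u-u^*(s))_+$ (Proposition~\ref{intermedio}); here $v$ still carries the genuine distribution of the singular part, not the boundary-concentrated one of $u^\star$. Only after integrating $\int_0^{+\infty}v(s)\,ds$ and swapping the order of integration does one apply Hardy--Littlewood to the absolutely continuous density $F_1$ against the increasing weight $t^{1/n}$, together with the crude bound $\int t^{1/n}\,d\sigma_2\le\abs{\Omega}^{1/n}\abs{D^{\mathrm s}u}(\R^n)$ on the singular piece. Both of these steps are strictly integral in nature and cannot be localized to individual levels $t$; that is exactly why only the $L^1$ comparison survives.
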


We will also deal with some applications, in particular we will consider
\begin{itemize}
    
    \item a penalized torsional rigidity problem
    \begin{equation*}
        T_\mathcal{F}(\Omega,\Lambda) := - \inf_{\psi \in H_0^1(\Omega)} \biggl( \frac{1}{2} \int_{\Omega} \abs{\nabla \psi}^2 \, dx - \int_{\Omega} \abs{\psi} \, dx + \Lambda\abs{ \left\{ \abs{\nabla \psi} \neq 0 \right\}} \biggr);
    \end{equation*}

    \item a modified torsional rigidity
    \begin{equation*}
        \frac{1}{T_\mathcal{G}(\Omega,m)} := \inf_{\psi \in H^1(\Omega)}  \frac{\displaystyle{\int_{\Omega} \abs{\nabla \psi}^2 \, dx + \frac{1}{m}\left( \int_{\partial \Omega} \abs{\psi} \, d\mathcal{H}^{n-1}\right)^2}}{\displaystyle{\left(\int_{\Omega} \abs{\psi} \, dx\right)^2}}.
    \end{equation*}
\end{itemize}
In both cases, we will prove a Saint-Venant type inequality:
\begin{equation*}
    T_{\mathcal{F}}(\Omega,\Lambda) \leq T_{\mathcal{F}}(\Omega^{\sharp}, \Lambda), \qquad	T_{\mathcal{G}}(\Omega,m) \leq T_{\mathcal{G}}(\Omega^{\sharp},m).
\end{equation*}

The paper is organized as follows: in Section \ref{section_preliminary} we recall some preliminary results and useful tools for our aim, in Section \ref{sec_proof_of_main_results} we prove our main result on the symmetrization of the gradient for a BV function, while in Section \ref{sec_some_applications} we present some applications of this kind of symmetrization.

\section{Notations and preliminaries}
\label{section_preliminary}

\subsection{Functions of bounded variation}
Let us summarize some basic notions concerning $\text{BV}$ functions, for all the  details we refer for instance to \cite{Ambrosio_Fusco_Pallara, CianchiFusco_BV, Evans_Gariepy}.

In the following, $\Omega$ will be an open set of $\R^n$.

\begin{definizione}
    A function $u \in L^1(\Omega)$ is said to be a \textbf{function of bounded variation} in $\Omega$ if its distributional derivative is a Radon measure, i.e.
    \begin{equation*}
        \int_{\Omega} u \parzder{\varphi}{x_i} \, dx = \int_{\Omega} \varphi \, dD^i u \qquad \forall \varphi \in C_C^{\infty}(\Omega),
    \end{equation*}
    with $Du$ a $\R^n$-valued measure in $\Omega$. The total variation of $Du$ will be denoted with $\abs{Du}$.

    The set of functions of bounded variation in $\Omega$ is denoted by $\text{BV}(\Omega)$ and it is a Banach space with respect to the norm $\norma{u}_{\text{BV}(\Omega)} : = \norma{u}_{L^1(\Omega)} + \abs{Du}(\Omega)$.
\end{definizione}

\begin{definizione}
    Let $E$ be a $\mathcal{L}^n$-measurable set. The \textbf{perimeter} of $E$ inside $\Omega$ is defined as
    \[
        \text{Per}(E,\Omega) : = \abs{D \chi_E}(\Omega),
    \]
    and we say that $E$ is a \textbf{set of finite perimeter} in $\Omega$ if $\chi_E \in \text{BV}(\Omega)$. If $\Omega = \R^n$, we denote $\text{Per}(E):=\text{Per}(E,\R^n)$.
    % \textbf{finite perimeter} if $\chi_E \in \text{BV}(\R^n)$ and $\text{Per}(E,\R^n)$ will be denoted with $\text{Per}(E)$.
\end{definizione}

It is also worth mentioning the isoperimetric inequality for sets of finite perimeter.

\begin{teorema}[Isoperimetric inequality]
    Let $E\subset\R^n$ be a bounded set of finite measure. Then it holds
    \begin{equation*}
        \abs{E} \leq n^{-\frac{n}{n-1}}\omega_n^{-\frac{1}{n-1}}\left[\text{Per}(E)\right]^{\frac{n}{n-1}},
    \end{equation*}
    where $\omega_n$ is the measure of $n$-dimensional ball of radius $1$.
\end{teorema}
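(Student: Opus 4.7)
The plan is to deduce the sharp isoperimetric inequality by first treating smooth bounded domains via the Brunn--Minkowski inequality, then passing to general sets of finite perimeter by approximation.

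For a smooth bounded domain $E \subset \R^n$, let $B$ denote the closed unit ball, so that $\abs{B} = \omega_n$. The Brunn--Minkowski inequality applied to $E$ and the scaled ball $\eps B$ gives
\begin{equation*}
    \abs{E + \eps B}^{1/n} \geq \abs{E}^{1/n} + \eps \, \omega_n^{1/n}.
\end{equation*}
Raising to the $n$-th power, subtracting $\abs{E}$, and dividing by $\eps$, one obtains
\begin{equation*}
    \frac{\abs{E + \eps B} - \abs{E}}{\eps} \geq n \, \omega_n^{1/n} \abs{E}^{(n-1)/n} + O(\eps).
\end{equation*}
Letting $\eps \to 0^+$, the left-hand side converges to the outer Minkowski content of $\partial E$, which for a smooth bounded set coincides with $\mathcal{H}^{n-1}(\partial E) = \text{Per}(E)$. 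A direct rearrangement $\abs{E}^{(n-1)/n} \leq n^{-1}\omega_n^{-1/n}\,\text{Per}(E)$ followed by raising to the power $n/(n-1)$ then yields the stated inequality in the smooth case.

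For the general case of a bounded set $E$ of finite perimeter, the strategy is to approximate $E$ by a sequence of smooth bounded sets $E_k$ satisfying $\chi_{E_k} \to \chi_E$ in $L^1(\R^n)$ \emph{and} $\text{Per}(E_k) \to \text{Per}(E)$. Such an approximation is standard: one first mollifies $\chi_E$ into $u_\eps \in C^\infty(\R^n)$ with $\norma{\nabla u_\eps}_{L^1(\R^n)} \to \text{Per}(E)$ as $\eps \to 0$, and then uses the coarea formula $\int_{\R^n} \abs{\nabla u_\eps}\,dx = \int_0^1 \text{Per}(\{u_\eps > t\})\,dt$ together with a mean value argument to select a superlevel set with favourable perimeter, invoking Sard's theorem to ensure its boundary is smooth for almost every threshold. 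Applying the smooth isoperimetric inequality to each $E_k$ and passing to the limit, using continuity of Lebesgue measure under $L^1$ convergence, delivers the bound for $E$.

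The main obstacle lies precisely in this approximation step: while $L^1$-lower semicontinuity of the perimeter yields $\liminf_k \text{Per}(E_k) \geq \text{Per}(E)$ for free, what one actually needs is convergence, equivalently a matching upper bound $\limsup_k \text{Per}(E_k) \leq \text{Per}(E)$, which is \emph{not} automatic and requires the coarea-plus-Sard selection argument sketched above. Once this approximation is in place, the Brunn--Minkowski step is clean and gives the sharp constant, with equality attained precisely on balls.
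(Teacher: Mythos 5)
The paper does not prove this statement: it is recalled as a classical fact in the preliminaries section and cited without argument, so there is no ``paper proof'' to compare against. Your Brunn--Minkowski route is one of the standard ways to establish the sharp isoperimetric inequality, and the skeleton of your argument is sound. The derivation for smooth domains is clean: Brunn--Minkowski for $\overline{E}$ and $\eps B$, expansion of $(\abs{E}^{1/n}+\eps\omega_n^{1/n})^n$, passage to the outer Minkowski content, and the classical identification of that limit with $\mathcal{H}^{n-1}(\partial E)=\text{Per}(E)$ for smooth sets. The algebraic rearrangement then gives exactly the constant in the statement.

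Two remarks on the approximation step, which you correctly identify as the delicate part. First, for the inequality you only need $\limsup_k \text{Per}(E_k)\leq \text{Per}(E)$, not full convergence; lower semicontinuity of the perimeter is irrelevant to the direction you are proving, so stating that you need ``convergence'' slightly overstates the requirement. Second, the mean-value-plus-Sard selection needs a bit more care than your sketch suggests: one must choose levels $t_\eps$ bounded away from $0$ and $1$ so that (i) $\chi_{\{u_\eps>t_\eps\}}\to\chi_E$ in $L^1$ (which uses $u_\eps\to\chi_E$ a.e.\ and $\abs{\{\chi_E=t\}}=0$ for $t\in(0,1)$), (ii) a.e.\ $t$ is a Sard-regular value of $u_\eps$, and (iii) the coarea average bound controls $\text{Per}(\{u_\eps>t_\eps\})$. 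These three requirements are simultaneously satisfiable on a positive-measure set of thresholds, but a diagonalization in $\eps$ is needed. Finally, your parenthetical claim that equality holds precisely for balls is not delivered by this argument at the level of finite-perimeter sets; rigidity does not pass through the smooth approximation and requires a separate (and substantially harder) argument. None of these affect the validity of the inequality itself, which your proof establishes.
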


By the Lebesgue decomposition Theorem, each component of $Du$  can be decomposed with respect to the Lebesgue measure, namely
\[
    D_i u = D_i^{\mathrm{a}}u + D_i^{\mathrm{s}}u \qquad \text{ with } D_i^{\mathrm{a}}u \ll \mathcal{L}^n, \quad D_i^{\mathrm{s}}u \perp \mathcal{L}^n.
\]
and
\[
    D_i^{\mathrm{a}}u = f_i \measurerestr \mathcal{L}^n,
\]
for some $f_i \in L^1(\Omega)$. So, defining
\[  
    \frac{\partial u}{\partial x_i} : = f_i, \qquad 
    \nabla^{\mathrm{a}} u = \biggl(  \frac{\partial u}{\partial x_1},\cdots, \frac{\partial u}{\partial x_n} \biggr) \qquad \text{ and } D^{\mathrm{s}} u = \bigl( D_1^{\mathrm{s}}u, \ldots, D_n^{\mathrm{s}}u  \bigr),
\]
we can write
\begin{equation*}
    dDu = \nabla^{\mathrm{a}} u \measurerestr  \mathcal{L}^n + dD^{\mathrm{s}}u.
\end{equation*}
Clearly it holds
\[
    \abs{Du}(A) = \lvert D^{\mathrm{a}} u \rvert (A) + \lvert D^{\mathrm{s}} u \rvert (A) = \int_{A} \abs{\nabla^{\mathrm{a}} u} \, dx + \lvert D^{\mathrm{s}} u \rvert (A),
\]
for every Borel set $A \subseteq \Omega$.

 Let us recall the following \textbf{Fleming-Rishel formula} (see \cite{Fleming_Rishel} or \cite{Evans_Gariepy}):
\begin{teorema}[Fleming-Rishel formula]
    \label{fleming_rishel_teo}
    Let $\Omega\subset \R^n $ be an open  set  and  let $u \in \text{BV}(\Omega)$, then for almost every $t \in (-\infty,+\infty)$ the set $
        \{ u>t\}
$
    has finite perimeter in $\Omega$ and it holds
    \begin{equation}
        \label{Fleming_Rishel_inequality}
        \abs{Du}(\Omega) = \int_{-\infty}^{+\infty} \text{Per}(\{ u>t\}, \Omega) \, dt.
    \end{equation}
    Moreover if $u \in L^1(\Omega)$ and
    \[
        \int_{-\infty}^{+\infty} \text{Per} (\{ u > t \}, \Omega) \, dt < +\infty,
    \]
    then $u \in \text{BV}(\Omega)$ and consequently \eqref{Fleming_Rishel_inequality} holds.
\end{teorema}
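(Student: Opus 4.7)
The plan is to split the identity into two matching inequalities; the easier direction will simultaneously yield the ``moreover'' clause. The base case is smooth $u$, for which the classical coarea formula for Lipschitz maps (together with Sard's theorem, which for $u\in C^1$ identifies $\text{Per}(\{u>t\},\Omega)$ with $\mathcal{H}^{n-1}(\{u=t\}\cap\Omega)$ for a.e.\ $t$) gives the identity directly.

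For the direction $|Du|(\Omega) \le \int_{-\infty}^{+\infty} \text{Per}(\{u>t\},\Omega)\, dt$, I would exploit the layer-cake representation
\[
u(x) = \int_0^{+\infty} \chi_{\{u>t\}}(x)\, dt \;-\; \int_{-\infty}^0 \chi_{\{u\le t\}}(x)\, dt,
\]
valid for $u\in L^1(\Omega)$, and pair it with a test field $\varphi \in C^1_c(\Omega,\R^n)$ with $\|\varphi\|_\infty\le 1$. Swapping the $x$ and $t$ integrals by Fubini (the product is integrable since $u\in L^1$ and $\text{div}\,\varphi$ is bounded with compact support), and using $\chi_{\{u\le t\}} = 1-\chi_{\{u>t\}}$ together with $\int_\Omega \text{div}\,\varphi\, dx = 0$ to kill the constant-in-$t$ contribution, I obtain
\[
\int_\Omega u\,\text{div}\,\varphi\, dx = \int_{-\infty}^{+\infty}\!\int_\Omega \chi_{\{u>t\}}\,\text{div}\,\varphi\, dx\, dt \le \int_{-\infty}^{+\infty} \text{Per}(\{u>t\},\Omega)\, dt,
\]
where the last inequality uses the definition of perimeter slice by slice. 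Taking the supremum over $\varphi$ gives the bound. Crucially this chain uses only $u\in L^1$ and finiteness of the right-hand side, so it also settles the ``moreover'' part: under those hypotheses the supremum defining $|Du|(\Omega)$ is finite, hence $u\in\text{BV}(\Omega)$, and the first half of the theorem then applies.

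For the reverse inequality I would use an Anzellotti--Giaquinta type smooth approximation: choose $u_k\in C^\infty(\Omega)\cap\text{BV}(\Omega)$ with $u_k\to u$ in $L^1(\Omega)$ and $\int_\Omega |\nabla u_k|\, dx\to |Du|(\Omega)$. For each smooth $u_k$ the classical coarea identity reads $\int_\Omega |\nabla u_k|\, dx = \int \text{Per}(\{u_k>t\},\Omega)\, dt$. Passing to a subsequence with $u_k\to u$ pointwise a.e., one has $\chi_{\{u_k>t\}}\to \chi_{\{u>t\}}$ in $L^1(\Omega)$ for every $t$ outside the countable set $\{s:|\{u=s\}|>0\}$. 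Combining the $L^1$-lower semicontinuity of the perimeter with Fatou's lemma then yields
\[
\int_{-\infty}^{+\infty} \text{Per}(\{u>t\},\Omega)\, dt \le \liminf_k \int_{-\infty}^{+\infty} \text{Per}(\{u_k>t\},\Omega)\, dt = |Du|(\Omega).
\]

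The main technical subtlety is the measurability of $t\mapsto\text{Per}(\{u>t\},\Omega)$, needed for the integrals to make sense; I would handle it by noting that $t\mapsto \chi_{\{u>t\}}$ is monotone as an $L^1(\Omega)$-valued map, and then appealing to $L^1$-lower semicontinuity of the perimeter to conclude Borel measurability of the slice map. The more conceptual obstacle is that one cannot expect continuity of slice perimeters along $u_k\to u$ (level sets can ``shed'' perimeter in the limit), which is precisely why the lower semicontinuity plus Fatou package is essential rather than a direct continuity argument.
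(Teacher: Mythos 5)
This theorem is stated in the paper as a recalled classical result, with the proof deferred to the cited references (Fleming--Rishel, Evans--Gariepy); the paper itself contains no argument to compare against. Your proof is correct and is essentially the standard one from those references: the test-field/layer-cake computation gives $\abs{Du}(\Omega)\leq\int\text{Per}(\{u>t\},\Omega)\,dt$ and, as you note, simultaneously the ``moreover'' clause, while smooth approximation with $\int_\Omega\abs{\nabla u_k}\,dx\to\abs{Du}(\Omega)$, the classical coarea formula, $L^1_{\loc}$-lower semicontinuity of the perimeter and Fatou give the reverse inequality. The only step I would tighten is the measurability of $t\mapsto\text{Per}(\{u>t\},\Omega)$: the cleanest route is to write it as a supremum over a countable dense family of test fields $\varphi$ of the functions $t\mapsto\int_{\{u>t\}}\div\varphi\,dx$, each measurable by Fubini, rather than composing the right-continuous slice map with the merely lower semicontinuous perimeter functional (a composition that requires the extra observation that a one-sided-continuous Banach-valued map is Borel); also, when $\abs{\Omega}=+\infty$ the convergence $\chi_{\{u_k>t\}}\to\chi_{\{u>t\}}$ should be taken in $L^1_{\loc}$, which is all that lower semicontinuity of the perimeter requires.
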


\subsection{Rearrangements of functions}
We now briefly recall some notions about rearrangements. We refer for instance to \cite{Talenti:compendio,kesavan,book_kahowl} for all the details.

\begin{definizione}
    \noindent Let $\Omega$ be a measurable set and let $u \colon \Omega \to \R$ be a measurable function, the \textbf{distribution function} of $u$ is defined as
    \begin{equation*}
        \mu \colon [0,+\infty) \to [0,+\infty) \qquad \mu(t) = \left \lvert \bigl( \left\{x \in \Omega \,  : \abs{u(x)}>t\right\} \bigr) \right \rvert 
    \end{equation*}
    where, here and throughout the paper, $\abs{E}$ denotes the $n$-dimensional Lebesgue measure of a measurable set $E$.
\end{definizione}

\noindent It can be proved that
\begin{itemize}
	\item $\mu$ is a decreasing function in $[0,+\infty)$;
	
	\item $\mu$ is right-continuous;
	
	\item $\mu(0) = \abs{ \supp u }$ and $\mu(+\infty) = 0$;

        \item $\mu(t^-) = \bigl \lvert \left\{ x \in \Omega \, : \, \abs{u(x)} \geq t \right\} \bigr \rvert $.

\end{itemize}

\begin{definizione}
    Let $u \colon \Omega \to \R$ be a measurable function, the \textbf{decreasing rearrangement} of $u$ is defined as
    \begin{equation*}
        u^* \colon \R^+ \to \R^+ \qquad u^*(s) = \inf\left\{t>0 \, : \mu(t) \leq s\right\}
    \end{equation*}
    and the \textbf{increasing rearrangement} of $u$ as
    \begin{equation*}
        u_* \colon [0,\abs{\Omega}] \to \R^+ \qquad u_*(s) = u^*(\abs{\Omega}-s)
    \end{equation*}
\end{definizione}
\noindent It can be proved that
\begin{itemize}
    \item $u^*$ ($u_*$) is a decreasing (increasing) function in $[0,+\infty)$;
    
    \item $u^*$ and $u_*$ are lower semi-continuous;
    
    \item whenever $u \in L^{\infty}(\Omega)$ $u^*(0) = \norma{u}_{L^{\infty}(\Omega)}$ and $u^*(t) = 0$ $\forall t \geq \abs{ \supp u }$;
    
    \item $u_*(\abs{\Omega}) = \norma{u}_{L^{\infty}(\Omega)}$ and $u_*(t) = 0$ $\forall 0 \leq t  \leq \abs{\Omega} -\abs{ \supp u }$;

    \item $u^*$ and $u_*$ have the same distribution function as $u$, so by Cavalieri's principle the $L^p$ norms are equal for every $p$;
    
    \item $u^\ast (\mu(t)) \leq t$ for every non-negative $t$, $\mu (u^\ast(s)) \leq s$ for every non-negative $s$;
       \item $u^\ast (\mu(t)^-) \geq t$ for every non-negative $t$, $\mu (u^\ast(s)^-) \geq s$ for every non-negative $s$;

    \item  the \textbf{Hardy-Littlewood inequality:} for any $u,v \colon \Omega \subseteq \R^n \to \R$
    \begin{equation}
        \label{Hardy-Littlewood}
        \int_{\Omega} \abs{u(x)v(x)} \, dx \leq \int_{\Omega^{\sharp}} u^*(x) v^*(x) \, dx = \int_{\Omega} u_*(x) v_*(x) \, dx
    \end{equation}

\end{itemize}

\begin{definizione}
    Let $u \colon \Omega \to \R$ be a measurable function. The \textbf{Schwarz rearrangement} or the \textbf{spherically symmetric decreasing rearrangement} of $u$ is defined as
    \begin{equation*}
        u^{\sharp} \colon \R^n \to \R^+ \qquad u^{\sharp}(x) = u^*(\omega_n \abs{x}^n)
    \end{equation*}
    where $\omega_n$ is the Lebesgue measure of the unit $n$-dimensional ball.
    
    Moreover the \textbf{spherically symmetric increasing rearrangement} of $u$ is defined as
    \begin{equation*}
        u_{\sharp} \colon \R^n \to \R^+ \qquad u_{\sharp} (x) = u_*(\omega_n \abs{x}^n)
    \end{equation*}
\end{definizione}

\noindent It can be proved that
\begin{itemize}

    \item $u^{\sharp}$ ($u_{\sharp}$) is non-negative, radial and radially decreasing (increasing);
    
    \item $u^{\sharp}, u_{\sharp}$ and $u$ are equally distributed which means they have the same distribution function;

    \item the Polya-Szeg\"{o} inequality holds true \cite{Polya_Szego}: if $u \in W_0^{1,p}(\Omega)$, then $u^{\sharp} \in W_0^{1,p}(\Omega^{\sharp})$ and
    \begin{equation*}
        \lVert \nabla u^{\sharp} \rVert_{L^p(\Omega^{\sharp})} \leq \norma{\nabla u}_{L^p(\Omega)}.
    \end{equation*}
    
\end{itemize}
We recall the Theorem of Giarrusso and  Nunziante (\cite[Theorem 2.2]{GN}).
\begin{teorema}
    \label{Giarrusso_Nunziante}
    Let $\Omega \subset \R^n$ be a bounded open set, let $\Omega^{\sharp}$ be the centered ball, let $p \geq 1$, let $f \colon \Omega \to \R$ be a measurable function, let $H \colon \R^n \to \R$ be measurable non-negative functions and let $K \colon [0,+\infty) \to [0,+\infty)$ be a strictly increasing real-valued function such that
    \[
        0 \leq K(\abs{y}) \leq H(y) \qquad \forall y \in \R^n \qquad \text{ and } K^{-1}(f) \in L^p(\Omega).
    \]
    Let $v \in W_0^{1,p}(\Omega)$ be a function that satisfies 
    \[
        \begin{cases}
		H(\nabla v) = f(x) &\text{a.e. in }\Omega \\
		v = 0 &\text{on } \partial \Omega
	\end{cases}
        ,
    \]
    denoting by $z \in W_0^{1,p}(\Omega^{\sharp})$ the unique spherically decreasing symmetric solution to
    \[
	\begin{cases}
		K(\abs{\nabla z}) = f_{\sharp}(x) & \text{a.e. in } \Omega^{\sharp} \\
		z = 0 & \text{on } \partial \Omega^{\sharp}
	\end{cases}
        ,
    \]
    then
    \begin{equation*}
	%\label{eq_Giarrusso_Nunziante}
	\norma{v}_{L^1(\Omega)} \leq \norma{z}_{L^1(\Omega^{\sharp})}.
    \end{equation*}
\end{teorema}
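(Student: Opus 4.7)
The plan is to prove the pointwise comparison $v^*(s) \leq z^*(s)$ on $[0, \abs{\Omega}]$ between the decreasing rearrangements of $v$ and $z$, from which
\[
\norma{v}_{L^1(\Omega)} = \int_0^{\abs{\Omega}} v^*(s)\,ds \leq \int_0^{\abs{\Omega}} z^*(s)\,ds = \norma{z}_{L^1(\Omega^\sharp)}
\]
follows at once by Cavalieri's principle.

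First, since $K$ is strictly increasing and $K(\abs{y}) \leq H(y)$, the equation $H(\nabla v) = f$ gives $K(\abs{\nabla v}) \leq f$, hence the pointwise bound $\abs{\nabla v(x)} \leq K^{-1}(f(x))$ a.e.\ in $\Omega$. Second, writing $z(x) = \zeta(\abs{x})$ with $\zeta$ decreasing and $\zeta(R) = 0$, and integrating the radial ODE $-\zeta'(r) = K^{-1}(f_*(\omega_n r^n))$ from $r$ to the radius $R$ of $\Omega^\sharp$, the change of variable $s = \omega_n r^n$ yields the explicit representation
\[
z^*(s) = \int_s^{\abs{\Omega}} \frac{K^{-1}(f_*(\sigma))}{n\omega_n^{1/n}\sigma^{(n-1)/n}}\,d\sigma.
\]
With these in hand, I would introduce the distribution function $\mu(t) := \abs{\{v > t\}}$ and combine the isoperimetric inequality $\text{Per}(\{v > t\}) \geq n\omega_n^{1/n}\mu(t)^{(n-1)/n}$, the coarea identity $-\mu'(t) = \int_{\{v=t\}}\abs{\nabla v}^{-1}\,d\mathcal{H}^{n-1}$, Cauchy-Schwarz on the level set, the gradient bound $\abs{\nabla v}\leq K^{-1}(f)$, and a Hardy-Littlewood-type rearrangement argument to derive a differential/integral estimate for $\mu$. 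Rewriting in terms of $v^*$ via $v^*(\mu(t)) = t$ and integrating from $s$ to $\abs{\Omega}$, using $v^*(\abs{\Omega}) = 0$, should deliver $v^*(s) \leq z^*(s)$.

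The main obstacle is the appearance of the \emph{increasing} rearrangement $f_*$ on the right-hand side, rather than the decreasing rearrangement $f^*$ that arises in Talenti's classical elliptic comparison. A direct Hardy-Littlewood bound applied to $\int_{\{v > t\}} K^{-1}(f)\,dx$ produces $f^*$ and yields a strictly weaker estimate (and in general an incorrect one). The correct derivation is specific to the Hamilton-Jacobi structure: the value $v(x)$ is controlled by the weighted distance from $x$ to $\partial\Omega$ with weight $K^{-1}(f)$, i.e.\ by the minimum-cost path joining $x$ to the boundary, which is necessarily forced through regions where $K^{-1}(f)$ takes its smaller values\,—\,precisely the ones described by $f_*$ on $[0,\abs{\Omega}]$; this intuition must be made rigorous at the level of sets via a careful use of the reverse Hardy-Littlewood inequality on the complementary set $\{v\leq t\}$ together with the coarea identity for $K^{-1}(f)\abs{\nabla v}$. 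A further subtlety is that the naive pointwise inequality $-(v^*)'(s) \leq -(z^*)'(s)$ need not hold (as one already sees in simple one-dimensional cases with $\abs{\nabla v} = K^{-1}(f)$, where $-(v^*)'$ is finite at $s=0$ while $-(z^*)'(0) = 0$), so the conclusion $v^* \leq z^*$ must be reached by integrating the differential estimate in $t$ rather than comparing slopes pointwise in $s$.
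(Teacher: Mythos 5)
This theorem is not proved in the paper: it is recalled from \cite{GN} and cited as a known result, so there is no in-house proof to compare against literally. The relevant internal comparison is with the paper's own BV generalisation (Proposition~\ref{intermedio} and the proof of Theorem~\ref{Teorema_che_scriveremo}), which reproduces and extends the Giarrusso--Nunziante argument.

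Your preliminary reductions are correct: the gradient bound $\lvert\nabla v\rvert\le K^{-1}(f)$ follows from $K(\lvert y\rvert)\le H(y)$ and monotonicity of $K$, and the explicit formula
\[
z^*(s)=\int_s^{\lvert\Omega\rvert}\frac{K^{-1}\bigl(f_*(\sigma)\bigr)}{n\omega_n^{1/n}\sigma^{(n-1)/n}}\,d\sigma
\]
is right. The genuine gap is in the central step. You propose to prove the \emph{pointwise} comparison $v^*(s)\le z^*(s)$ and then integrate. That is a strictly stronger statement than the $L^1$ conclusion, and you never establish it: the middle paragraph (``combine the isoperimetric inequality, coarea, Cauchy--Schwarz, and a Hardy--Littlewood-type argument to derive a differential/integral estimate'') is an outline, not a derivation, and the closing paragraph acknowledges the decisive obstacle --- that a direct Hardy--Littlewood application produces $f^*$ where $f_*$ is needed --- without resolving it. ``A careful use of the reverse Hardy--Littlewood inequality on $\{v\le t\}$'' is exactly the place where the proof would have to be done, and no mechanism is given that actually delivers $f_*$.

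More tellingly, the route you choose diverges from the one the paper (and \cite{GN}) actually takes, and in a way that matters. In the proof of Proposition~\ref{intermedio} one obtains the pointwise bound $u^*(s)\le v(s)$, where $v$ is built from the derivative $F_1$ of $G_1(s)=\int_{\{u>u^*(s)\}}\lvert\nabla^{\mathrm a}u\rvert\,dx$ --- \emph{not} from the increasing rearrangement of $\lvert\nabla u\rvert$. The passage from $F_1$ to $\lvert\nabla u\rvert_*$ is done only after integrating in $s$, by Lemma~\ref{lemma_Alvino_Trombetti} (Alvino--Trombetti) together with Hardy--Littlewood applied to the weight $t^{1/n}$: one exploits that $F_1$ is majorized by $\lvert\nabla u\rvert^*$ in the Hardy--Littlewood--P\'olya sense and trades it, under the monotone weight $t^{1/n}$, for the increasing rearrangement. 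This is an $L^1$-level mechanism; it does not produce, and is designed precisely to avoid needing, a pointwise inequality $v^*\le z^*$. Your proposal skips this two-stage structure --- (i) pointwise bound of $v^*$ against an \emph{auxiliary} function defined from the co-area derivative, (ii) an integral rearrangement argument comparing that auxiliary function with $z^*$ --- and in doing so asserts a stronger intermediate claim with no argument for it. To repair the proposal you would either have to prove the pointwise inequality $v^*\le z^*$ directly (which is not obviously true and which neither this paper nor \cite{GN} attempts), or adopt the two-stage structure above: establish $v^*(s)\le\int_s^{\lvert\Omega\rvert}\frac{F_1(\tau)}{n\omega_n^{1/n}\tau^{(n-1)/n}}\,d\tau$ via the isoperimetric inequality and Fleming--Rishel, and then pass to $L^1$ norms using the Alvino--Trombetti approximation and Hardy--Littlewood to replace $F_1$ by $\bigl(K^{-1}(f)\bigr)_*$.
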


Moreover, in \cite{Mercaldo_remark_comparison_Hamilton_Jacobi} the following uniqueness result is proved:
\begin{teorema}
    \label{Mercaldo}
    Let $\Omega \subset \R^n$ be a bounded open set, let $v \in W_0^{1,1}(\Omega)$ be a non-negative function. Denote by $f(x) = \abs{\nabla v}(x)$ and by $w \in W_0^{1,1}(\Omega^{\sharp})$ the decreasing spherically symmetric solution to
    \[
        \abs{\nabla w} = f_{\sharp}.
    \]
    If $\norma{v}_{L^1} = \norma{w}_{L^1}$ then there exists $x_0 \in \R^n$ such that $\Omega = x_0 + \Omega^{\sharp}$, $f=f_{\sharp}(\cdot \, + x_0)$ and $v = w (\cdot \, + x_0)$.
\end{teorema}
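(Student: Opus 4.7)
The natural strategy is to trace the equality case through the proof of Theorem \ref{Giarrusso_Nunziante}, specialised to $H(y)=\abs{y}$ and $K=\operatorname{id}$. Recall that the Giarrusso--Nunziante argument establishes the pointwise inequality $\mu_v(t) \leq \mu_w(t)$ between the distribution functions of $v$ and $w$, by combining, for a.e. level $t>0$, four ingredients: the coarea formula (in the form $-\mu_v'(t)=\int_{\{v=t\}}\abs{\nabla v}^{-1}\,d\mathcal{H}^{n-1}$), the isoperimetric inequality applied to $\{v>t\}$, a Cauchy--Schwarz step on $\{v=t\}$ that uses $\abs{\nabla v}=f$, and the Hardy--Littlewood comparison $\int_{\{v>t\}}f\,dx\le\int_0^{\mu_v(t)}f^*(\sigma)\,d\sigma$. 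Cavalieri's principle then delivers $\norma{v}_{L^1}\le\norma{w}_{L^1}$. Under the standing assumption $\norma{v}_{L^1}=\norma{w}_{L^1}$ one has $\int_0^\infty(\mu_w-\mu_v)\,dt=0$, and since the integrand is non-negative this forces $\mu_v\equiv\mu_w$, hence equality in each of the four steps at a.e. $t>0$.

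I would then read off three rigidity consequences. (i) Equality in the isoperimetric inequality, combined with the rigidity results of De Giorgi / Fusco--Maggi--Pratelli, gives that for a.e. $t$ the super-level set $\{v>t\}$ coincides, up to a null set, with a ball $B(x(t),r(t))$ whose radius is fixed by $\omega_n r(t)^n=\mu_v(t)$. (ii) Equality in Hardy--Littlewood forces $\{v>t\}$ to agree (up to null sets) with a super-level set $\{f>c_t\}$ of $f$, so the two families of super-level sets coincide and there exists a Borel $\phi\ge 0$ with $f=\phi\circ v$ a.e. (iii) Equality in the Cauchy--Schwarz step on the level set is precisely the assertion that $\abs{\nabla v}=f$ is constant on $\{v=t\}$ for a.e.\ $t$, consistent with (ii).

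The heart of the argument --- and the step I expect to be the main obstacle --- is promoting ``each level set is a ball'' to ``all the level balls share the same centre''. I would pass to the reparametrised function $u(x):=\int_0^{v(x)}d\tau/\phi(\tau)$ on the support of $v$, for which $\abs{\nabla u}=1$ a.e.\ and whose super-level sets coincide with those of $v$, hence equal the balls $B(x(t),r(t))$. Because $\abs{\nabla u}=1$ and $u$ realises its Lipschitz constant, moving outward from $\partial B(x(t),r(t))$ along the unit normal $\hat n$ for a distance $t-s$ must land on $\partial B(x(s),r(s))$. With $p=x(t)+r(t)\hat n$, this requirement reads $\abs{(x(t)-x(s))+(r(t)+t-s)\hat n}^2=r(s)^2$ for every unit vector $\hat n$; expanding shows that the only way this can hold for all $\hat n$ simultaneously is $x(t)=x(s)$. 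Consequently all level balls share a common centre $x_0$ and $v$ is radial and decreasing about $x_0$.

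Finally, $v$ and $w$ are both non-negative, radially decreasing (about $x_0$ and $0$ respectively), and equi-distributed; two such functions with the same distribution and with support being a ball of the same radius must be translates of one another, so $v$ coincides with the translate of $w$ by $x_0$. Reading off supports gives $\Omega=x_0+\Omega^\sharp$, and the corresponding identity for $f$ follows from $\abs{\nabla v}=f$ together with $\abs{\nabla w}=f_\sharp$.
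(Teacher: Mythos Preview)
The paper does not prove this statement: Theorem~\ref{Mercaldo} is quoted verbatim from \cite{Mercaldo_remark_comparison_Hamilton_Jacobi} as an external result, with no proof given. There is therefore nothing in the present paper to compare your argument against.

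That said, your strategy---tracing the equality case back through the chain of inequalities (isoperimetric, Cauchy--Schwarz on level sets, Hardy--Littlewood) in the Giarrusso--Nunziante proof---is exactly the standard route to such rigidity statements, and is almost certainly the approach taken in the original reference. A couple of points in your sketch would need tightening before it becomes a proof. First, passing from ``$\{v>t\}$ is a superlevel set of $f$ for a.e.\ $t$'' to the existence of a Borel $\phi$ with $f=\phi\circ v$ a.e.\ requires a short measurable-selection argument. Second, the definition $u(x)=\int_0^{v(x)}d\tau/\phi(\tau)$ tacitly assumes $\phi>0$ a.e., which need not hold on plateaus of $v$; one usually works instead with the distribution function directly, or restricts to the set where $\nabla v\neq 0$. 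Third, and most importantly, the ``moving outward along the normal by $t-s$ lands on the outer sphere'' step is not automatic: it presupposes that the integral curves of $\nabla u/|\nabla u|$ are straight segments, which is the nontrivial content of the eikonal rigidity. A cleaner way to force concentricity is to combine the nesting $B(x(t),r(t))\subset B(x(s),r(s))$ with the radius relation $r(s)-r(t)=t-s$ (from coarea and $|\nabla u|=1$), which gives $|x(t)-x(s)|\le r(s)-r(t)$, and then use the $1$-Lipschitz bound in the opposite direction to get equality only when the centres coincide. These are repairs rather than obstructions; the architecture of your argument is sound.
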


From now on $\Omega\subset \R^n$ is a bounded open set with finite perimeter. Let us consider
\[
\text{BV}_0(\Omega):= \left\{u \in \text{BV}(\R^n) :\, u\equiv 0 \text{ in }\R^n \setminus \Omega \right\},
\]
and $u$ a non-negative function belonging to  $\text{BV}_0(\Omega)$.
Let us define
\begin{equation}
    \label{deff_f_piccolo}
    f(x,s) = \bigl( u - u^*(s)\bigr)_+(x) \qquad x \in \R^n, \, s \in [0,+\infty).
\end{equation}

 The function $f(\cdot, s)$ belongs to $\text{BV}_0(\Omega)$ for every $s \in [0, +\infty)$  since it is a truncation of $u$ (See \cite[Theorem 3.96]{Ambrosio_Fusco_Pallara}). Moreover, for every $s \in [0,+\infty)$ we denote by
%Then $f(\cdot, s) \in \text{BV}_0(\Omega)$ {\color{red} f è$ BV_0$?} for almost every $s \in [0,+\infty)$ and we denote by
\begin{equation}
    \label{def_G}
    G(s) = \lvert D f(\cdot ,s) \rvert (\R^n) = \lvert D^a f(\cdot,s) \rvert (\R^n) + \lvert D^s f(\cdot,s) \rvert (\R^n) = G_1(s) + G_2(s),
\end{equation}
where $D^a f$ and $D^s f$ are, respectively, the absolutely continuous part and singular part of the measure $Df$.

The following corollary holds.

\begin{corollario}
\label{fleming_rishel_cor}
    Let $u$ be a non-negative function belonging to  $\text{BV}_0(\Omega)$ and let $G(s)$ be the function defined as in \eqref{def_G}. Then  for a.e. $s \in [0,+\infty)$:
     \begin{equation}
        \label{2.12}
        G(s)= \int_{u^*(s)}^{+\infty} \text{Per}(\{ u>\xi\} ) \, d\xi.
    \end{equation}  
\end{corollario}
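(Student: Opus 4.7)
The plan is to apply the Fleming--Rishel formula (Theorem \ref{fleming_rishel_teo}) directly to the truncated function $f(\cdot,s)$ and then rewrite the level sets in terms of those of $u$. Since $f(\cdot,s) = (u - u^*(s))_+ \in \text{BV}_0(\Omega)$ for every $s$ (it is a truncation of the $\text{BV}$ function $u$, see \cite[Theorem 3.96]{Ambrosio_Fusco_Pallara}), we may write
\begin{equation*}
    G(s) = \lvert Df(\cdot,s)\rvert(\R^n) = \int_{-\infty}^{+\infty} \text{Per}\bigl(\{f(\cdot,s) > t\}\bigr)\, dt.
\end{equation*}

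Next I would split the integral according to the sign of $t$. Since $f(\cdot,s) \geq 0$, the level set $\{f(\cdot,s) > t\}$ equals $\R^n$ for every $t<0$; as this set has zero perimeter, the contribution from $(-\infty,0)$ drops. For $t\geq 0$, by definition of $f(\cdot, s)$,
\begin{equation*}
    \{f(\cdot,s) > t\} = \{(u-u^*(s))_+ > t\} = \{u > u^*(s) + t\}.
\end{equation*}
Hence
\begin{equation*}
    G(s) = \int_0^{+\infty} \text{Per}\bigl(\{u > u^*(s) + t\}\bigr)\, dt,
\end{equation*}
and the change of variable $\xi = u^*(s)+t$ yields exactly \eqref{2.12}.

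I expect no real obstacle: the only sanity check is that $f(\cdot,s)$ inherits membership in $\text{BV}_0(\Omega)$ (so Fleming--Rishel applies on all of $\R^n$ with perimeters computed in $\R^n$, consistent with the definition of $G(s)$), and that the $t<0$ contribution vanishes. The ``a.e.~$s$'' in the statement is not needed for the identity itself but is harmless, since $u^*$ is defined pointwise and the perimeters $\text{Per}(\{u>\xi\})$ are finite for a.e.~$\xi$ thanks to Fleming--Rishel applied to $u$ itself.
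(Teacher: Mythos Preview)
Your proposal is correct and is essentially identical to the paper's own proof: apply Fleming--Rishel to $f(\cdot,s)=(u-u^*(s))_+\in\text{BV}_0(\Omega)$, drop the negative levels (since $f\ge 0$ forces $\{f>t\}=\R^n$ for $t<0$, which has zero perimeter), and change variables $\xi=u^*(s)+t$. Your remark that the ``a.e.\ $s$'' is not really needed for the identity is also accurate.
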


\begin{proof}
For a.e. $s \in [0,+\infty)$, applying \ref{fleming_rishel_teo} with $E=\R^n$ to the function $f(\cdot,s)$ defined in \eqref{deff_f_piccolo}, we have
\begin{equation}
   G(s)=\abs{D \bigl( (u-u^*(s) \bigr)_+)}(\R^n) = \int_{-\infty}^{+\infty} \text{Per}\left(\left\{\bigl( u - u^*(s)\bigr)_+> \xi\right\}\right)\, d\xi. 
\end{equation}
Moreover, we have 
\[
    \int_{-\infty}^{+\infty} \text{Per}\left(\left\{\bigl( u - u^*(s)\bigr)_+> \xi\right\}\right)\, d\xi= \int_{0}^{+\infty} \text{Per}\left(\left\{ u - u^*(s)> \xi\right\}\right)\, d\xi,
\]
and a change of variables gives \eqref{2.12}.
\end{proof}

The following properties hold:
\begin{enumerate}

    \item  $G$ is an increasing function on $(0,+\infty)$ by \eqref{2.12}, constant in $(\abs{\Omega}, +\infty)$,  it belongs to $\BV_{\text{loc}}([0,+\infty))$. Then, there exists a positive measure $\sigma$ such that
    \begin{equation}
        \label{deff_dF}
        G(s)= \int_{(0,s]} \, d\sigma(\tau)  \qquad \forall s \in [0,+\infty);
    \end{equation}

    \item $\displaystyle{G_1(s) = \int_{\{u > u^\ast (s)\}} \lvert \nabla^{\mathrm{a}} u \rvert \, dx}$ is increasing and  $AC$ on $[0,+\infty)$, then there exists a function $F_1$  belonging to $L^1([0,+\infty))$:
    \[
    G_1(s)= \int_0^s F_1(\tau)\, d\tau  \qquad \forall s \in [0,+\infty);
    \]

    \item $G_2$ is an increasing function belonging to $\BV_{\text{loc}}([0,+\infty))$, so there exists a positive measure $\sigma_2$ such that
    \[
    G_2(s)= \int_{(0,s]}\, d\sigma_2(\tau)  \qquad \forall s \in [0,+\infty).
    \]
\end{enumerate}

Then, $\forall s\geq 0$
\begin{equation}
    \label{divisione_G}
     G(s) =\sigma((0,s])= \int_{(0,s]} \, d\sigma(\tau)  = \int_{0}^s  F_1(\tau) \,d\tau  + \int_{(0,s]} d\sigma_2(\tau) 
\end{equation}

We will need the following technical lemma which can be proved by arguing as \cite[Lemma 2.1]{Alvino_Trombetti_costanti_maggiorazione}.
\begin{lemma}
    \label{lemma_Alvino_Trombetti}
    Let $\Omega$ be a bounded open set in $\R^n$. If $g \in L^1([0,\abs{\Omega}))$, then there exists a sequence of functions $\{g_k\}$ such that $g_k^* = g^*$ and
    \begin{equation}
        \label{eq_Alvino_Trombetti}
        \lim_k \int_0^{\abs{\Omega}} g_k (s) \varphi (s) \, dx = \int_0^{\abs{\Omega}} g(s) \varphi(s) \, ds, \qquad \forall \varphi \in \BV \bigl( [0,\abs{\Omega}) \bigr).
    \end{equation}
\end{lemma}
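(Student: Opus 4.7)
The plan is to construct $\{g_k\}$ by a piecewise rearrangement of $g$ on a partition of $[0,\abs{\Omega})$ that becomes finer as $k\to\infty$. Without loss of generality assume $g\geq 0$ (the signed case reduces to this by splitting into positive and negative parts). For each $k$, partition $[0,\abs{\Omega})$ into $k$ intervals $I_j^{(k)}$ of equal length, and on each $I_j^{(k)}$ define $g_k$ to be the decreasing rearrangement of $g$ restricted to $I_j^{(k)}$ (regarded as a function on that interval). Since $g_k$ is equimeasurable with $g$ on each piece, summing over $j$ yields global equimeasurability, i.e.\ $g_k^*=g^*$; the key consequence is the piecewise integral identity $\int_{I_j^{(k)}} g_k\,ds=\int_{I_j^{(k)}} g\,ds$ for every $j$.

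Next, given $\varphi\in\BV([0,\abs{\Omega}))$, let $\varphi_k$ denote its piecewise constant interpolant on the partition, taking on each $I_j^{(k)}$ the mean value of $\varphi$. In one dimension $\BV\subset L^\infty$, so $\norma{\varphi_k}_{L^\infty}\leq\norma{\varphi}_{L^\infty}$, and Lebesgue's differentiation theorem gives $\varphi_k\to\varphi$ almost everywhere. A standard BV interpolation estimate moreover yields $\norma{\varphi-\varphi_k}_{L^1}\leq(\abs{\Omega}/k)\abs{D\varphi}([0,\abs{\Omega}))\to 0$. Because $\varphi_k$ is constant on each $I_j^{(k)}$, the piecewise integral identity above gives $\int g\,\varphi_k\,ds=\int g_k\,\varphi_k\,ds$, so we may write
\begin{equation*}
\int_0^{\abs{\Omega}}(g-g_k)\,\varphi\,ds=\int_0^{\abs{\Omega}} g\,(\varphi-\varphi_k)\,ds-\int_0^{\abs{\Omega}} g_k\,(\varphi-\varphi_k)\,ds.
\end{equation*}
The first summand vanishes in the limit by dominated convergence, since $g\in L^1$ and $\abs{\varphi-\varphi_k}\leq 2\norma{\varphi}_{L^\infty}$ with $\varphi_k\to\varphi$ a.e.

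The main obstacle is the second summand, to which dominated convergence does not apply directly because $g_k$ itself depends on $k$. My plan is to exploit the equimeasurability $g_k^*=g^*$ through the Hardy–Littlewood inequality \eqref{Hardy-Littlewood}:
\begin{equation*}
\left|\int_0^{\abs{\Omega}} g_k\,(\varphi-\varphi_k)\,ds\right|\leq\int_0^{\abs{\Omega}} g_k^*(s)\,\abs{\varphi-\varphi_k}^*(s)\,ds=\int_0^{\abs{\Omega}} g^*(s)\,\abs{\varphi-\varphi_k}^*(s)\,ds.
\end{equation*}
Since $\norma{\varphi-\varphi_k}_{L^1}\to 0$, Chebyshev's inequality forces $\abs{\{\abs{\varphi-\varphi_k}>t\}}\to 0$ for every $t>0$, hence $\abs{\varphi-\varphi_k}^*(s)\to 0$ pointwise for every $s>0$. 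A final application of dominated convergence with majorant $2\norma{\varphi}_{L^\infty}\,g^*\in L^1$ closes the estimate and yields $\int g_k\,\varphi\,ds\to\int g\,\varphi\,ds$. The delicate point is precisely the passage from $L^1$-smallness of $\varphi-\varphi_k$ (coming from the BV regularity of $\varphi$) to pointwise smallness of the decreasing rearrangement $\abs{\varphi-\varphi_k}^*$, which is what allows one to swap dominated convergence past the $k$-dependent integrand.
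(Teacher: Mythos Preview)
The paper does not actually supply a proof of this lemma; it simply states that the result ``can be proved by arguing as \cite[Lemma~2.1]{Alvino_Trombetti_costanti_maggiorazione}'' and moves on. Your argument is therefore not to be compared against anything in the paper itself, but it is essentially the standard Alvino--Trombetti construction and is correct: the key points (piecewise rearrangement on a partition that becomes finer, equimeasurability yielding $\int_{I_j} g_k = \int_{I_j} g$, and control of the $k$-dependent term via Hardy--Littlewood together with pointwise convergence of $\abs{\varphi-\varphi_k}^*$) are all sound.

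One small caveat: your reduction ``without loss of generality $g\geq 0$, the signed case reduces to this by splitting into positive and negative parts'' is not quite complete as written. If you build separate sequences $(g^+)_k$ and $(g^-)_k$ by your recipe and set $g_k=(g^+)_k-(g^-)_k$, there is no reason for $\abs{g_k}$ to be equimeasurable with $\abs{g}$, since the supports of $(g^+)_k$ and $(g^-)_k$ may overlap. The fix is easy: on each $I_j^{(k)}$ first split $I_j^{(k)}$ into two subintervals of lengths $\abs{I_j^{(k)}\cap\{g\geq 0\}}$ and $\abs{I_j^{(k)}\cap\{g<0\}}$, place the decreasing rearrangement of $g^+\!\restriction_{I_j^{(k)}}$ on the first and minus the decreasing rearrangement of $g^-\!\restriction_{I_j^{(k)}}$ on the second. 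This preserves both the distribution of $\abs{g}$ and the identity $\int_{I_j^{(k)}} g_k=\int_{I_j^{(k)}} g$, and the rest of your proof goes through verbatim. In any case, the only application in the paper is to the nonnegative function $F_1$, so the issue is cosmetic.
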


\section{Proof of Theorem \ref{Teorema_che_scriveremo}}
\label{sec_proof_of_main_results}
Let us define the following function
\begin{equation}
    \label{deff_v}
    v(s) := \int_s^{+\infty }\frac{1}{ n \omega_n^{\frac{1}{n}} \tau^{1-\frac{1}{n}}} \, d\sigma(\tau)\qquad \forall s \in [0,+\infty),
\end{equation}
where $\sigma$ is defined in \eqref{deff_dF}. We observe that, since $\supp( \sigma) \subseteq [0,\abs{\Omega}]$, $v$ is identically 0 on $(\abs{\Omega}, +\infty)$, hence $v \in \text{BV}_0([0,\abs{\Omega}])$.

As intermediate step towards Theorem \ref{Teorema_che_scriveremo}, we prove the following proposition.

\begin{prop}
    \label{intermedio}
    Let $\Omega \subset \R^n$ be a bounded open set with finite perimeter and assume that $u$ is  a non-negative function belonging to  $\text{BV}_0(\Omega)$. If $v(s)$ is  the function defined as in \eqref{deff_v}, then
    \begin{equation}
        \label{comparison_u*}
        u^*(s) \leq v(s)  \qquad \text{ for a.e. } s \in [0,+\infty).
    \end{equation}
\end{prop}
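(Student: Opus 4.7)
The idea is to recast the desired pointwise inequality as a measure comparison on $[0,|\Omega|]$. Let $\nu$ be the non-negative Radon measure associated with the non-increasing, right-continuous function $-u^*$, i.e.\ $\nu((s_1,s_2]) = u^*(s_1)-u^*(s_2)$. Since $u\in\text{BV}_0(\Omega)$ forces $u^*(|\Omega|)=0$, we have $\nu((s,|\Omega|])=u^*(s)$. The plan is to establish the measure comparison
\[
    d\sigma(\tau) \geq n\omega_n^{1/n}\,\tau^{(n-1)/n}\,d\nu(\tau)\qquad\text{on }(0,|\Omega|],
\]
from which \eqref{comparison_u*} follows at once: integrating the above against $\mathbf{1}_{(s,|\Omega|]}(\tau)/(n\omega_n^{1/n}\tau^{(n-1)/n})$ gives $v(s)$ on the left and $\nu((s,|\Omega|])=u^*(s)$ on the right.

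To establish the measure comparison I would test it on an arbitrary half-open interval $(s_1,s_2]\subset(0,|\Omega|]$. Corollary~\ref{fleming_rishel_cor} yields
\[
\sigma((s_1,s_2])=G(s_2)-G(s_1)=\int_{u^*(s_2)}^{u^*(s_1)}\text{Per}(\{u>\xi\})\,d\xi,
\]
while the relations $u^*(\mu(t)^-)\geq t$ and $\mu(u^*(s))\leq s$ recalled in Section~\ref{section_preliminary} imply $\mu(\xi)\in[s_1,s_2]$ whenever $\xi\in(u^*(s_2),u^*(s_1))$. Applying the isoperimetric inequality $\text{Per}(\{u>\xi\})\geq n\omega_n^{1/n}\mu(\xi)^{(n-1)/n}$ and then changing the variable $\tau=\mu(\xi)$ should transform the right-hand integral into one over $(s_1,s_2]$ against $d\nu$, producing
\[
\sigma((s_1,s_2])\geq n\omega_n^{1/n}\int_{(s_1,s_2]}\tau^{(n-1)/n}\,d\nu(\tau),
\]
and the measure comparison then follows by the arbitrariness of $s_1<s_2$.

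The main subtlety is the change of variables $\xi\mapsto\mu(\xi)$, because $u^*$ can have jumps and $\mu$ can have plateaus. I expect to handle this by decomposing $\nu$ into its absolutely continuous, jump and Cantorian components and treating each separately: on the AC part one uses $\mu(u^*(\tau))=\tau$ for a.e.\ $\tau$; each atom of $\nu$ at some $\tau_0$ corresponds to $\mu\equiv\tau_0$ on the interval $(u^*(\tau_0),u^*(\tau_0^-))$ of heights $\xi$, on which the isoperimetric inequality yields the uniform bound $\text{Per}(\{u>\xi\})\geq n\omega_n^{1/n}\tau_0^{(n-1)/n}$ and hence $\sigma(\{\tau_0\})\geq n\omega_n^{1/n}\tau_0^{(n-1)/n}\nu(\{\tau_0\})$; a Cantorian component can be dealt with by density, possibly via Lemma~\ref{lemma_Alvino_Trombetti}.
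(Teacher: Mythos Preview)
Your strategy coincides with the paper's: reduce \eqref{comparison_u*} to the measure inequality $d\sigma(\tau)\ge n\omega_n^{1/n}\tau^{(n-1)/n}\,d\nu(\tau)$ with $\nu=-Du^*$, obtain it on intervals by combining Corollary~\ref{fleming_rishel_cor} with the isoperimetric inequality, and then integrate against the weight $\bigl(n\omega_n^{1/n}\tau^{(n-1)/n}\bigr)^{-1}$ over $(s,\infty)$.

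The only substantive divergence is in how you carry out the change of variables $\xi\mapsto\mu(\xi)$, i.e.\ the identity
\[
\int_{u^*(s_2)}^{u^*(s_1)} n\omega_n^{1/n}\mu(\xi)^{(n-1)/n}\,d\xi
=\int_{(s_1,s_2]} n\omega_n^{1/n}\tau^{(n-1)/n}\,d\nu(\tau).
\]
The paper does not decompose $\nu$; it introduces the Lipschitz primitive $H(\tau)=\int_\tau^{+\infty} n\omega_n^{1/n}\mu(\xi)^{(n-1)/n}\,d\xi$, observes that the left side above is $D[H\circ u^*]\bigl((s_1,s_2]\bigr)$, and then invokes the BV chain rule of Ambrosio--Dal~Maso \cite{ADM} to get
\[
\frac{dD[H(u^*)]}{dDu^*}=-n\omega_n^{1/n}s^{(n-1)/n}
\]
uniformly on all of $[0,\infty)$, the jump part being checked by the elementary computation $H\bigl((u^*)^+(s)\bigr)-H\bigl((u^*)^-(s)\bigr)=-n\omega_n^{1/n}s^{(n-1)/n}\bigl((u^*)^+(s)-(u^*)^-(s)\bigr)$ since $\mu\equiv s$ on $(u^*(s),u^*(s^-))$. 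This single citation handles the diffuse (absolutely continuous \emph{and} Cantorian) part in one stroke.

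Your treatment of the AC and jump parts is correct and matches the paper's computation, but your plan for the Cantorian part is off: Lemma~\ref{lemma_Alvino_Trombetti} is about approximating $\int g\varphi$ by equidistributed sequences and plays no role here, and a vague ``density'' argument does not obviously close the gap. The clean fix is exactly the paper's: replace the hands-on decomposition by the chain rule for $H\circ u^*$ with $H$ Lipschitz.
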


\begin{proof}
The isoperimetric inequality implies
    \begin{equation*}
        % \label{isoperimetrica}
        n \omega_n^{\frac{1}{n}} \mu(t)^{1-\frac{1}{n}} \leq \text{Per}(\{u>t\}) \qquad
         \forall t \in [0.+\infty),
    \end{equation*}
     by \eqref{2.12} and \eqref{divisione_G} we have
    \begin{equation*}
        %\label{Fleming_Rishel}
        G(s) = \int_{u^*(s)}^{+\infty} \text{Per}(\left\{ u> \xi \right\}) \, d\xi = \int_{(0,s]}\, d\sigma(\tau) \qquad \text{ for a.e. }s \in [0,+\infty).
    \end{equation*}

    Hence, for all $0 \leq s_1 <  s_2  < + \infty$ we have 
    \begin{align*}
    	 \sigma\big((s_1,s_2)\big)=\int_{s_1}^{s_2} \, d\sigma(\tau) &= \lim _{s \to s_2^-}G(s) - G(s_1) \\&=\lim _{s \to s_2^-} \int_{u^\star(s)}^{u^\star(s_1)}\text{Per}(\left\{ u> \xi \right\}) \, d\xi \\& \geq \lim _{s \to s_2^-}\int_{u^\star(s)}^{u^\star(s_1)} n \omega_n^{\frac{1}{n}} \mu(\xi)^{1-\frac{1}{n}} \, d\xi= D\big[H(u^*)\big]\big((s_1,s_2)\big),
    \end{align*}
  where
\[
H(\tau) = \int_{\tau}^{+\infty} n \omega_n^{\frac{1}{n}} \mu(\xi)^{1-\frac{1}{n}} \, d\xi.
\]
    Since this holds for every open interval   $(s_1 ,s_2)$, we have
    \begin{equation}
        \label{confr_measure}
                \sigma(A) \geq D\big[H(u^*)\big](A) \qquad \forall A \subseteq [0, +\infty)\text{ Borel set}.
    \end{equation}

	Observing that $H$ is a Lipschitz function, $D\big[H(u^*)\big]$ is given by (see \cite{ADM})
	  \[
D\big[H(u^*)\big] = 
	\begin{cases}
		-n \omega_n^{\frac{1}{n}} s^{1-\frac{1}{n}} Du^\ast& \text{ on } [0, +\infty) \setminus J_{u^\ast} \\
- n \omega_n^{\frac{1}{n}} s^{1-\frac{1}{n}}\left( (u^*)^+- (u^*)^-\right),& \text{ on }  J_{u^\ast}
	\end{cases}
	\]

   since  $\mu(u^\ast(s))= s$ a.e. with respect $Du^\ast$ (by the properties of the rearrangements) and since for $s \in J_{u^\ast}$

    \begin{align*}
        H\bigl( ( (u^*)^+(s) \bigr)- H \bigl( ( (u^*)^-(s)\bigr) &=  \int_{ u^*(s)}^{u^*(s^-)} n \omega_n^{\frac{1}{n}} \mu(\xi)^{1-\frac{1}{n}} \, d\xi \\
        &= - n \omega_n^{\frac{1}{n}} s^{1-\frac{1}{n}}\left( (u^*)^+(s)- (u^*)^-(s)\right).
       % \\& = - n \omega_n^{\frac{1}{n}}   s^{1-\frac{1}{n}} Du^\ast(s),
    \end{align*}
    Then we can write 
    \begin{equation}
        \label{DH}
        \frac{dD\big[H(u^*)\big] }{dDu^\ast}=  - n \omega_n^{\frac{1}{n}} s^{1-\frac{1}{n}}.
     %     DH(u^\ast(s)) = - n \omega_n^{\frac{1}{n}} s^{1-\frac{1}{n}} Du^\ast(s).
    \end{equation}
     Therefore, by means of \eqref{confr_measure}, \eqref{DH}, we have 
    \begin{equation*}
         u^*(s)  = -\int_s^{+\infty} \, d(D u^\ast)(\tau) =  \int_s^{+\infty} \frac{dD\big[H(u^*)\big](\tau)}{n \omega_n^{ \frac{1}{n} }\tau^{1 - \frac{1}{n}} }\leq \int_s^{+\infty} \frac{d\sigma(\tau)}{n \omega_n^{ \frac{1}{n} } \tau^{1 - \frac{1}{n}} }= v(s).
    \end{equation*}
% Gli integrali sono su chiusi o aperti indifferentemente perchè le funzioni sono continue a destra
    % A change of variables and \eqref{divisione_G} gives us
    % \[
    %  \int_0^s dF(\tau) =G(s) = \int_0^s \text{Per} (\{ u > u^*(\tau)\}) \, \bigl( - d (D u^\ast)(\tau) \bigr) \qquad \text{a.e. }s \in [0,+ \infty),
    %  \]
    % therefore, by means of \eqref{isoperimetrica}, we can estimate the Radon-Nikodym derivative of $dF$ with respect to $d(D u^\ast)$
    % \begin{equation}
    %     -\frac{d F}{d (D u^\ast)}(\tau) = \text{Per} (\left\{ u > u^*(\tau)\right\})  \geq  n \omega_n^{\frac{1}{n}} \mu(u^*(\tau))^{1-\frac{1}{n}} \quad \text{a.e. }\tau \in [0,+ \infty).
    % \end{equation}
%Then
%\begin{equation*}
%     u^*(s)  = -\int_s^{+\infty} \, d(D u^\ast)(\tau) \leq \int_s^{+\infty} \frac{dF(\tau)}{n \omega_n^{ \frac{1}{n} } \tau^{1 - \frac{1}{n}} }= v(s).
%\end{equation*}
\end{proof}

\vspace{1 em}

Now we are in position to prove the main theorem.

\begin{proof}[Proof of Theorem \ref{Teorema_che_scriveremo}]
    First of all, let us emphasize that the decreasing rearrangement of $u^\star$, defined in \eqref{eq_che_risolve_u_picche}, is
     \[
        (u^\star)^\ast(s)= \int_s^{+\infty} \frac{ \lvert \nabla^{\mathrm{a}} u \rvert_* (t) }{n\omega_n^{\frac{1}{n}}t^{1-\frac{1}{n}}} \, dt  + \frac{1}{\text{Per}(\Omega^\sharp)} \abs{D^s u}(\R^n) \, \chi_{[0,\abs{\Omega}]}(s) \qquad \forall s \in [0,+\infty).
    \]
    Now, let us integrate \eqref{comparison_u*} between $0$ and $+\infty$ and let us use Fubini's Theorem to obtain
    \begin{align*}
       \int_0^{+\infty} u^*(s) \, ds & \leq \int_0^{+\infty} v(s) \, ds \\
        & = \frac{1}{n \omega_n^{\frac{1}{n}}} \int_0^{+\infty}  \Biggl( \int_s^{+\infty} \, \frac{d\sigma(t)}{t^{1-\frac{1}{n}}} \Biggr) \, ds \\[1ex]
        & = \frac{1}{n\omega_n^{\frac{1}{n}}} \int_0^{+\infty} \Biggl( \int_0^t \, \frac{ds}{t^{1-\frac{1}{n}}} \Biggr) \, d\sigma(t) \\
        & = \frac{1}{n\omega_n^{\frac{1}{n}}} \int_0^{+\infty} t^{\frac{1}{n}} \, dF(t)\\
        &=\frac{1}{n\omega_n^{\frac{1}{n}}} \left[\int_0^{+\infty}t^{\frac{1}{n}} F_1(t) \, dt + \int_0^{+\infty} t^{\frac{1}{n}} \, d\sigma_2(t)\right].
    \end{align*}
    By \eqref{eq_Alvino_Trombetti} applied to $F_1$ and the Hardy-Littlewood inequality \eqref{Hardy-Littlewood}, we have
    \begin{align*}
         \int_0^{+\infty} t^{\frac{1}{n}} F_1(t) \, dt &=\int_0^{\abs{\Omega}} t^{\frac{1}{n}} F_1(t) \, dt = \lim_k \int_0^{\abs{\Omega}} t^{\frac{1}{n}} (F_1)_k(t) \, dt \\
         &\leq \int_0^{\abs{\Omega}} t^{\frac{1}{n}} \lvert \nabla^{\mathrm{a}} u \rvert_* (t) \, dt = \int_0^{+\infty} t^{\frac{1}{n}} \lvert \nabla^{\mathrm{a}} u \rvert_* (t) \, dt,
    \end{align*}
    then
    \begin{equation}
    \label{stima_cruciale}
        \begin{aligned}
        \int_0^{+\infty} u^*(s) \, ds & \leq \frac{1}{n\omega_n^{\frac{1}{n}}} \Biggl[ \int_0^{+\infty}  t^{\frac{1}{n}} \lvert \nabla^{\mathrm{a}} u \rvert_* (t) \, dt +  \int_0^{+\infty} t^{\frac{1}{n}} \, d\sigma_2(t) \Biggr] \\
        & \leq \frac{1}{n\omega_n^{\frac{1}{n}}} \Biggl[ \int_0^{+\infty}  t^{\frac{1}{n}} \lvert \nabla^{\mathrm{a}} u \rvert_* (t) \, dt + \abs{\Omega}^{\frac{1}{n}}  \int_0^{+\infty} \, d\sigma_2(t) \Biggr],
    \end{aligned}
    \end{equation}
    since $F_2(A)=0$ for all $A \subset (\abs{\Omega},+\infty)$.

    Using again Fubini's Theorem, we can compute
    \begin{align*}
        \int_0^{+\infty} \lvert \nabla^{\mathrm{a}} u \rvert_* (t) t^{\frac{1}{n}} \, dt  & = \int_0^{+\infty} \frac{ \lvert \nabla^{\mathrm{a}} u \rvert_* (t) }{t^{1-\frac{1}{n}}} \int_0^t \, ds = \int_0^{+\infty} \Biggl( \int_s^{+\infty} \frac{ \lvert \nabla^{\mathrm{a}} u \rvert_* (t) }{t^{1-\frac{1}{n}}}  \, dt \Biggr) \, ds,
    \end{align*}
    and
    \begin{equation*}
        \frac{\abs{\Omega}^{\frac{1}{n}}}{n \omega_n^\frac{1}{n}} \int_0^{+\infty} dF_2(t) = \abs{\Omega} \frac{1}{\text{Per}(\Omega^\sharp)} \abs{D^s u}(\R^n) = \int_0^{+\infty}\frac{1}{\text{Per}(\Omega^\sharp)} \abs{D^s u}(\R^n) \chi_{[0,\abs{\Omega}]}(s) \,ds.
    \end{equation*}
    
    Hence, \eqref{stima_cruciale} can be written as
    \begin{align*}
        \norma{u}_{L^1(\Omega)} \leq \int_0^{+\infty}\Biggl[ \int_s^{+\infty} \frac{ \lvert \nabla^{\mathrm{a}} u \rvert_* (t) }{n\omega_n^{\frac{1}{n}}t^{1-\frac{1}{n}}} \, dt  + \frac{1}{\text{Per}(\Omega^\sharp)} \abs{D^s u}(\R^n) \chi_{[0,\abs{\Omega}]}(s)\Biggr]\, ds =\norma{u^\star}_{L^1(\Omega^{\sharp})}.
    \end{align*}    
\end{proof}

\begin{oss}
	We stress the following facts:
%    We want to stress the fact that the total variation of the symmetrization thus obtained is equal to that of the starting function, i.e.
%    \[
%        \abs{Du}(\R^n)=\abs{Du^\star}(\R^n),
%    \]
%    and moreover the total variations of the absolutely continuous and singular part are preserved
    \[
        \abs{D^{\mathrm{a}} u}(\R^n) = \int_{\R^n} \abs{\nabla^\mathrm{a} u} \, dx = \int_{\Omega^{\sharp}} \abs{\nabla^\mathrm{a} u^{\star}} \, dx \quad\text{ and } \quad\abs{D^{s} u}(\R^n) = \abs{D^{s} u^\star}(\R^n),
    \]
    and then 
     \[
            \abs{Du}(\R^n)=\abs{Du^\star}(\R^n).
        \]
\end{oss}

\section{Two versions of the torsional rigidity}
\label{sec_some_applications}

For a given $\Lambda >0$ we consider
\begin{equation}
    \label{deff_funzionale_F}
    \F_{\Lambda}(\psi) := \frac{1}{2} \int_{\Omega} \abs{\nabla \psi}^2 \, dx - \int_{\Omega} \psi \, dx + \Lambda\abs{ \left\{ \abs{\nabla \psi} \neq 0 \right\}} \qquad \psi \in H_0^1(\Omega),
\end{equation}
 and the associated minimum problem:
\begin{equation}
    \label{penalized_torsional_rigidity}
    T_\mathcal{F}(\Omega,\Lambda) := - \inf_{\psi \in H_0^1(\Omega)} \F_{\Lambda}(\psi).
\end{equation}
First of all, let us observe that the minimum can be found among non-negative functions. Indeed, passing from $\psi$ to $\abs{\psi}$ it holds $\mathcal{F}(\psi) \geq \mathcal{F}(\abs{\psi})$.

Assuming that problem \eqref{penalized_torsional_rigidity} admits a minimum $u \in H^1_0(\Omega)$, then it is also a maximum for the torsional rigidity defined by Diaz, Polya and  Weinstein in \cite{Torsional_Rigidity_Diaz_Weinstein,Torsional_rigidity_Polya_Weinstein} of a multiply-connected cross-section with fixed measure of the holes, that is
\[
    T(\Omega) = \max_{\substack{ \psi \in C_0(D) \cap C^1(\Omega) \\ \psi \text{ constant} \\ \text{in every }A_i }} \frac{\displaystyle{\biggl(\int_{D} \psi \, dx \biggr)^2}}{\displaystyle{\int_{D} \abs{\nabla \psi}^2 \, dx}},
\]
where $A_i$ are the connected component of $\{\abs{\nabla u }=0\}$ and $D = \Omega \cup \bigcup_i A_i$.

Functionals with penalizing terms are very common in the mathematical modelling of physical problems.
 The bibliography is very wide and some cornerstones are \cite{Alt_Caffarelli,De_Giorgi_Carriero_Leaci}.

However, in the literature,  penalizing terms of the form $\abs{\left\{\abs{\nabla \psi} \neq 0\right\}}$ are quite unusual.
The main difficulty in the study of  \eqref{penalized_torsional_rigidity} is to prove the existence of a minimizer because of the lack of the  lower semicontinuity of the functional.

For this reason, we prove the existence of a minimizer in the case when $\Omega$ is a ball.

\begin{prop}
    \label{teo_esistenza}
    Let $\Lambda, R>0$ and let $B_R$ be the  centered ball with radius $R$. Then the functional $\mathcal{F}_\Lambda$ defined in \eqref{deff_funzionale_F} admits a minimizer $v$  belonging to $H_0^1(\Omega)$. Such a minimizer is unique up to a sign, it is radially symmetric and $\abs{\nabla v}$ is radially increasing.
\end{prop}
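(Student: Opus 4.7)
The functional $\mathcal{F}_\Lambda$ fails to be weakly lower semicontinuous on $H_0^1(B_R)$ because the penalization $|\{|\nabla \psi|\neq 0\}|$ can drop under weak limits, so the direct method does not apply; the plan is to exploit the rotational symmetry to reduce the problem to a one-dimensional pointwise minimization that can be solved explicitly. First I would reduce to non-negative, radial, non-increasing competitors. WLOG $\psi \ge 0$ since $\mathcal{F}_\Lambda(|\psi|) \le \mathcal{F}_\Lambda(\psi)$ (use $|\nabla|\psi|| = |\nabla \psi|$ a.e.). Then $\mathcal{F}_\Lambda(\psi^\sharp) \le \mathcal{F}_\Lambda(\psi)$: Polya--Szeg\H{o} handles the Dirichlet term, equimeasurability handles $\int \psi$, and for the penalization I would verify
\[
    |\{|\nabla \psi^\sharp| = 0\}| = \sum_{c\,:\,|\{\psi=c\}|>0} |\{\psi = c\}| \le |\{|\nabla \psi|=0\}|,
\]
where the equality records that the flat parts of $\psi^*$ have total length equal to the jumps of the distribution function, and the inequality follows from the fact that $|\nabla \psi|=0$ a.e.\ on every level set $\{\psi = c\}$ (a coarea consequence).

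Next, for radial $\psi(x) = u(|x|)$, changing variables $s = \omega_n |x|^n$ and setting $U(s) = u((s/\omega_n)^{1/n})$, $V(s) = -U'(s)\ge 0$, a direct computation gives
\[
\mathcal{F}_\Lambda(\psi) = \int_0^{|B_R|} \Bigl[\tfrac{1}{2}\alpha(s) V(s)^2 - s V(s) + \Lambda\,\chi_{\{V>0\}}(s)\Bigr]\, ds, \qquad \alpha(s) := n^2\omega_n^{2/n} s^{2(n-1)/n},
\]
which is a pointwise problem in $V(s)\ge 0$. At each $s$, compare $V=0$ (cost $0$) with $V=s/\alpha(s)$ (cost $\Lambda - s^{2/n}/(2n^2\omega_n^{2/n})$, the minimum over $V>0$); the two agree at $s^* = \omega_n (n\sqrt{2\Lambda})^n$, corresponding to $r^* := n\sqrt{2\Lambda}$. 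If $r^* \ge R$ then $v\equiv 0$; otherwise integrating back yields
\[
    v(x) = \frac{R^2 - \max(|x|, r^*)^2}{2n},
\]
so $|\nabla v(x)| = (|x|/n)\,\chi_{\{|x|>r^*\}}$ is radially non-decreasing.

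For uniqueness, the pointwise 1D minimum is uniquely attained off the null set $\{s = s^*\}$, so $V^*$ and hence the radial minimizer are unique. If $\psi$ is any minimizer, equality $\mathcal{F}_\Lambda(\psi) = \mathcal{F}_\Lambda(\psi^\sharp) = \mathcal{F}_\Lambda(v)$ forces equality in Polya--Szeg\H{o}; since the plateau of $v$ is exactly at its essential supremum, one checks $\{|\nabla v|=0\}\cap\{0<v<\|v\|_\infty\} = \emptyset$, so the Brothers--Ziemer rigidity applies and forces $\psi = \psi^\sharp = v$ (translation is fixed on the centered ball). The main obstacle throughout is precisely the non-l.s.c.\ penalization term; it is handled by the level-set counting identity of the first step and by the two-alternative pointwise structure of the reduced 1D problem, which sidesteps any compactness/semicontinuity issue.
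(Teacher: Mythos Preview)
Your reduction to radial competitors via Schwarz symmetrization has a genuine gap. To obtain $\mathcal{F}_\Lambda(\psi^\sharp) \le \mathcal{F}_\Lambda(\psi)$ you need $|\{|\nabla \psi^\sharp| \neq 0\}| \le |\{|\nabla \psi| \neq 0\}|$, equivalently $|\{|\nabla \psi^\sharp| = 0\}| \ge |\{|\nabla \psi| = 0\}|$; but the chain you wrote ends with the \emph{opposite} inequality $|\{|\nabla \psi^\sharp| = 0\}| \le |\{|\nabla \psi| = 0\}|$. Moreover the intermediate equality $|\{|\nabla \psi^\sharp| = 0\}| = \sum_{c} |\{\psi = c\}|$ is false in general: take any radially decreasing $\psi \in H^1_0(B_R)$ whose one-variable profile is strictly decreasing yet has vanishing derivative on a set of positive measure (an absolutely continuous Cantor-type construction); then $\psi^\sharp = \psi$, the left-hand side is positive while the right-hand side is zero. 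Polya--Szeg\"{o} controls convex functionals of $|\nabla \psi|$, but $t \mapsto \chi_{\{t>0\}}$ is not convex, so Schwarz symmetrization gives no direct control of the penalization term.

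The paper sidesteps this with a different rearrangement. Given $\psi \ge 0$, Theorem~\ref{Giarrusso_Nunziante} furnishes a radial $z \in H^1_0(B_R)$ with $|\nabla z| = |\nabla \psi|_\sharp$, the radially \emph{increasing} rearrangement of $|\nabla \psi|$. Because $|\nabla z|$ and $|\nabla \psi|$ are equimeasurable, one has exactly $\int|\nabla z|^2 = \int|\nabla \psi|^2$ and $|\{|\nabla z| \neq 0\}| = |\{|\nabla \psi| \neq 0\}|$, while $\int z \ge \int \psi$; hence $\mathcal{F}_\Lambda(z) \le \mathcal{F}_\Lambda(\psi)$. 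On this restricted class the zero set of the gradient is always a centered ball $B_{r}$, so along a minimizing sequence one extracts $r_k \to r$ and recovers lower semicontinuity of the penalization by an elementary argument; uniqueness is then obtained from Theorem~\ref{Mercaldo}. Your explicit pointwise solution of the one-dimensional radial problem in step~2 is correct and would combine cleanly with this gradient-rearrangement reduction, but as written your step~1 does not bring you to the radial class.
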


\begin{proof}
	We divide the proof in 3 steps.
	
	\begin{enumerate}
		\item Boundness from below.
  
            \noindent First of all, let us prove that the functional $\F_{\Lambda}$ is bounded from below for every choice of $\Lambda$ and for every $R>0$. For all $\psi \in H^1_0(B_R)$, sing Young and Poincaré inequalities, we get
		\begin{align*}
                \F_{\Lambda}(\psi) &= \frac{1}{2} \int_{B_R} \abs{\nabla \psi}^2 \, dx - \int_{B_R} \psi \, dx + \Lambda\abs{ \left\{ \nabla \psi \neq 0 \right\}} \\
                & \geq \frac{1}{2} \int_{B_R} \abs{\nabla \psi}^2 \, dx - \eps \int_{B_R} \frac{\psi^2}{2} - \frac{\abs{B_R}}{2\eps} \\
                & \geq \frac{1}{2} \int_{B_R} \abs{\nabla \psi}^2 \, dx - \frac{\eps C(n,B_R)}{2} \int_{B_R} \abs{\nabla \psi}^2 \, dx  - \frac{\abs{B_R}}{2\eps} \\
                &= \frac{(1-\eps C(n,B_R))}{2} \int_{B_R} \abs{\nabla \psi}^2 \, dx - \frac{\abs{B_R}}{2 \eps}.
		\end{align*}
		Chosing $\eps$ sufficiently small such that
		\[
		  0< \eps \leq \frac{1}{C(n,B_R)}
		\]
		then
		\[
			\F_{\Lambda}(\psi) \geq - \frac{\abs{B_R} }{2 C(n,B_r)}
			\geq - C (n,B_R) > -\infty
		\]
		so
		\[
		      T(B_R, \Lambda) = -\inf_{\psi \in H_0^1(B_R)} \F_{\Lambda}(\psi) <\infty.
		\]

		\item Compactness and semicontinuity. 
            
            \noindent Now we consider a minimizing sequence $\{\psi_k\}$ for $T_\mathcal{F}(B_R,\Lambda)$ and we prove that it is bounded in $H_0^1(B_R)$. We can assume that $\F_{\Lambda}(\psi_k) \leq -T_\mathcal{F}(B_R,\Lambda)+1$ and by Proposition \ref{Giarrusso_Nunziante} we can assume that $\psi_k$ are radial function with $\abs{\nabla \psi_k}$ radially symmetric increasing.
		
		\noindent Using Young and Poincaré inequalities, we obtain
		\begin{align*}
                \F_{\Lambda}(\psi_k) & = \frac{1}{2} \int_{B_R} \abs{\nabla \psi_k}^2 \, dx - \int_{B_R} \psi_k \, dx + \Lambda\abs{ \left\{ \nabla \psi_k \neq 0 \right\}} \\
                & \geq \frac{1}{2} \int_{B_R} \abs{\nabla \psi_k}^2 \, dx - \int_{B_R} \psi_k \, dx \\
                & \geq \frac{1}{2} \int_{B_R} \abs{\nabla \psi_k}^2 \, dx - \eps \int_{B_R} \frac{\psi_k^2}{2} - \frac{\abs{B_R}}{2\eps}  \\
                & \geq \frac{1}{2} \int_{B_R} \abs{\nabla \psi_k}^2 \, dx - \frac{\eps C(n,B_r)}{2} \int_{B_R} \abs{\nabla \psi_k}^2 \, dx  - \frac{\abs{B_R}}{2\eps}   \\
                &= \frac{1-\eps C(n,B_R)}{2} \int_{B_R} \abs{\nabla \psi_k}^2 \, dx - \frac{\abs{B_R}}{2\eps}.
		\end{align*}
		
		Choosing $\eps <\displaystyle{ \frac{1}{C(n,B_R) }}$ we have
		\[
                -T_\mathcal{F}(B_R,\Lambda)+1 \geq \F_{\Lambda}(\psi_k) \geq \frac{1}{4} \int_{B_R} \abs{\nabla \psi_k}^2 \, dx - C (B_R)
		\]
            then by Poincaré inequality, the sequence $\{\psi_k\}$ is bounded in $H_0^1(B_R)$.
            
            \noindent This implies that there exists a subsequence (still denoted by $\psi_k$) and a function $v \in H_0^1(B_R)$ such that $\psi_k \to v$ strongly in $L^2(\Omega)$, a.e. in $\Omega$ and $\nabla \psi_k \rightharpoonup \nabla v$ weakly in $L^2$. Let us show that $v$ is a minimum for $\F_{\Lambda}$.
		
		\noindent The lower semicontinuity of the norms gives
		\begin{equation}
			\label{semicontinuità_primi_2_termini}
                \liminf_k \biggl[ \frac{1}{2} \int_{B_R} \abs{\nabla \psi_k}^2 \, dx - \int_{B_R} \psi_k \, dx \biggr]  \geq \frac{1}{2} \int_{B_R} \abs{\nabla v}^2 \, dx - \int_{B_R} v \, dx.
		\end{equation}
  
            \noindent Let us deal with the last term of $\F_{\Lambda}$ and let us prove that
		\begin{equation*}
                \liminf_k \abs{ \left\{\abs{\nabla u_k}  \neq 0 \right\} } \geq \abs{\left\{ \abs{\nabla u} \neq 0 \right\} }.
		\end{equation*}
            Denoting by $r_k$ the radius of the ball where $\abs{\nabla \psi_k}=0$, we can assume that $r_k$ converges to some $r \geq 0$. Therefore
		\[
                \liminf_k \, \abs{ \left\{\abs{\nabla \psi_k} \neq 0\right\}} = \lim_k \,  [\omega_n (R^n-r_k^n)] = \omega_n (R^n - r^n ).
		\]
            So we have just to prove that $\abs{\nabla v} = 0$ in $B_r$. Since $\{ \psi_k \}$ are radial functions, obviously $v$ is radial  too.
		
		If $r=0$ there is nothing to prove.
  
            If $r > 0$, assume by contradiction that there exists $A \subset B_r$ with $\abs{A}>0$ and that $\abs{\nabla v} \neq 0$ in $A$. Clearly there exists $\eps>0$ such that $\abs{A \cap B_{r-\eps}} >0 $.
            
            \noindent Since $r_k \to r$ if we choose a function $g \in C^{\infty}_C(B_R, \R^n)$ with support included in $A \cap B_{r-\eps}$ we have
		\[
                \int_{B_R} \langle \nabla v , g \rangle \, dx = \lim_k \int_{B_R} \langle \nabla \psi_k , g \rangle \, dx = 0.
		\]
            Since this must be true for every $g \in C^{\infty}_{C}(A \cap B_{r-\eps},\R^n)$, we get a contradiction.

		\noindent Then in any case
		\begin{equation}
			\label{semicontinuity_of_support_measure}
                \liminf_k \abs{\left\{ \abs{\nabla \psi_k} \neq 0\right\}} \geq \abs{\left\{ \abs{\nabla v} \neq 0\right\}}.
		\end{equation}
  
            \noindent By \eqref{semicontinuità_primi_2_termini} and\eqref{semicontinuity_of_support_measure}, we get
		\[
                -T_\mathcal{F}(B_R, \Lambda)=\liminf_k \F_{\Lambda}(\psi_k) \geq \F_{\Lambda}(v) \geq -T_\mathcal{F}(B_R,\Lambda)
		\]
		so $v$ is a minimum of $\F_{\Lambda}$ in $B_R$.

            \item Uniqueness.
            
            \noindent Let us suppose that $v$ is a minimum of $\mathcal{F}_\Lambda(\psi)$. By Theorem \ref{Giarrusso_Nunziante}, it exists $\overline{v} \in H_0^1(B_R)$ such that
            \[
                \F_{\Lambda} (v) \geq \F_{\Lambda}(\overline{v})
            \]
            and since $v$ is minimum, it holds
            \[
                \F_{\Lambda} (v) = \F_{\Lambda}(\overline{v}).
            \]
            Since $\abs{\nabla v}$ is equally distributed with $\abs{\nabla \overline{v}}$, the previous equality implies
            \[
                \norma{v}_{L^1} = \norma{\overline{v}}_{L^1}
            \]
            so Theorem \ref{Mercaldo} gives that $\abs{v} = \overline{v}$.

	\end{enumerate}
\end{proof}

\begin{oss}
    \label{osse_per_minimo}
    We highlight that Theorem \ref{Giarrusso_Nunziante} ensures us that the minimum when $\Omega$ is a ball has gradient equal to zero only in a ball $B_r$ centered at the origin with $0 \leq r \leq R.$
\end{oss}

Now, as already mention in the introduction, we prove a Saint-Venant type inequality for $T_\mathcal{F}(\Omega,\Lambda)$.
\begin{corollario}
    \label{teorema_palla_meglio}
    Let $\Omega \subset \R^n$ be a bounded open set with finite perimeter and let $\Omega^{\sharp}$ be the centered ball. If $\Lambda > 0$, then
    \begin{equation*}
        T_\mathcal{F}(\Omega,\Lambda) \leq T_\mathcal{F}(\Omega^{\sharp}, \Lambda).
    \end{equation*}
\end{corollario}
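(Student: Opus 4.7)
The plan is to exhibit, for every non-negative competitor $\psi \in H_0^1(\Omega)$, a radial competitor $\psi^\star \in H_0^1(\Omega^\sharp)$ for which the functional $\F_{\Lambda}$ does not increase. Taking the infimum over $\psi$ will then yield $\inf_{\Omega^\sharp} \F_{\Lambda} \leq \inf_{\Omega} \F_{\Lambda}$, which is the desired inequality. As observed just after \eqref{deff_funzionale_F}, replacing $\psi$ by $|\psi|$ decreases $\F_{\Lambda}$, so there is no loss of generality in assuming $\psi \geq 0$.

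Given such a $\psi$, since $\psi \in H_0^1(\Omega)\subset \text{BV}_0(\Omega)$ with $D^s \psi \equiv 0$, the symmetrization $\psi^\star$ from \eqref{eq_che_risolve_u_picche} reduces to the radial function in $W_0^{1,1}(\Omega^\sharp)$ satisfying $|\nabla \psi^\star|(x) = |\nabla^a \psi|_\sharp(x)$ a.e. in $\Omega^\sharp$ and $\psi^\star = 0$ on $\partial \Omega^\sharp$. Since $|\nabla \psi|$ and $|\nabla^a \psi|_\sharp$ are equidistributed, $|\nabla \psi^\star| \in L^2(\Omega^\sharp)$ and hence, by Poincaré's inequality, $\psi^\star \in H_0^1(\Omega^\sharp)$.

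Now I would compare each of the three terms of $\F_{\Lambda}$ at $\psi$ and at $\psi^\star$. The Dirichlet term is preserved, because $|\nabla \psi^\star|$ and $|\nabla \psi|$ are equidistributed, so that
\[
\int_{\Omega^\sharp} |\nabla \psi^\star|^2 \, dx = \int_{\Omega} |\nabla \psi|^2 \, dx .
\]
By the same equidistribution, the measure of the set where the gradient is non-zero is also preserved, giving
\[
\left|\{|\nabla \psi^\star|\neq 0\}\right| = \left|\{|\nabla \psi|\neq 0\}\right| .
\]
Finally, since $D^s \psi = 0$, Theorem \ref{Teorema_che_scriveremo} applied to $\psi$ gives
\[
\int_\Omega \psi \, dx \leq \int_{\Omega^\sharp} \psi^\star \, dx .
\]
Combining these three facts yields $\F_{\Lambda}(\psi^\star) \leq \F_{\Lambda}(\psi)$.

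To conclude, passing to the infimum on both sides gives
\[
\inf_{v \in H_0^1(\Omega^\sharp)} \F_{\Lambda}(v) \leq \F_{\Lambda}(\psi^\star) \leq \F_{\Lambda}(\psi)\qquad \forall \psi \in H_0^1(\Omega),
\]
so that $\inf_{H_0^1(\Omega^\sharp)} \F_{\Lambda} \leq \inf_{H_0^1(\Omega)} \F_{\Lambda}$, i.e. $T_{\mathcal{F}}(\Omega,\Lambda) \leq T_{\mathcal{F}}(\Omega^\sharp,\Lambda)$. The only non-trivial ingredient is the $L^1$ comparison from Theorem \ref{Teorema_che_scriveremo}; the preservation of the Dirichlet energy and of the measure of the non-vanishing set of the gradient are immediate from the fact that the gradient of $\psi^\star$ is exactly the (spherically symmetric) increasing rearrangement of $|\nabla \psi|$.
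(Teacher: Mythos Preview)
Your argument is correct and follows essentially the same route as the paper: for each competitor $\psi\in H_0^1(\Omega)$ one builds the radial $\psi^\star\in H_0^1(\Omega^\sharp)$ from \eqref{eq_che_risolve_u_picche}, observes that the Dirichlet energy and the measure of $\{|\nabla\psi|\neq 0\}$ are preserved by equidistribution, and invokes Theorem~\ref{Teorema_che_scriveremo} (or, equivalently in the Sobolev case, Theorem~\ref{Giarrusso_Nunziante}) for the $L^1$ comparison. The paper states this in one line, whereas you spell out the three-term comparison explicitly; the content is the same.
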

    \begin{proof}
    For every function $\psi \in H_0^{1}(\Omega)$, by Theorem \ref{Giarrusso_Nunziante} or \ref{Teorema_che_scriveremo}, there exists $\ov{\psi} \in H_0^{1}(\Omega^{\sharp})$ that satisfies
    \[
        \F_{\Lambda}(\psi) \geq \F_{\Lambda}(\ov{\psi}) \geq -T_\mathcal{F}(\Omega^{\sharp},\Lambda)
    \]
    and then
    \[
        T_\mathcal{F}(\Omega, \Lambda) \leq T_\mathcal{F}(\Omega^{\sharp},\Lambda).
    \]
\end{proof}

Now we deal with the functional 
\begin{equation*}
    %\label{deff_funzionale_G}
    \mathcal{G}(\psi) := \frac{\displaystyle{\int_{\Omega} \abs{\nabla \psi}^2 \, dx + \frac{1}{m}\left( \int_{\partial \Omega} \abs{\psi} \, d\mathcal{H}^{n-1}\right)^2}}{\displaystyle{\left(\int_{\Omega} \abs{\psi} \, dx\right)^2}}  \qquad \psi \in H^1(\Omega).
\end{equation*}
with $m > 0$.

The interest in this type of functional is related to the problem of optimal insulation in a given domain. Indeed, the minimum of $\mathcal{G}$ gives the long-time distribution of temperature of the domain $\Omega$ and the displacement around $\Omega$ of a thin layer of insulator with total mass equal to $m$. We refer to \cite{Bucur_Buttazzo_Nitsch} for more details.

If $\Omega$ is a Lipschitz domain, $\mathcal{G}(\psi)$ achieves its minimum among all $H^1(\Omega)$ functions. So we define
\[
    \frac{1}{T_\mathcal{G}(\Omega,m)}:= \min_{\psi \in H^1(\Omega)} \mathcal{G}(\psi).
\]

It is easy to check that the Euler–Lagrange equation of this functional is
\begin{equation*}
    \begin{cases}
         -\Delta u= 1 & \text{in }\Omega \\
        \displaystyle{\frac{\partial u}{\partial \nu} + \frac{1}{m} \int_{\partial\Omega} \abs{u} \, d\mathcal{H}^{n-1}} = 0 &\text{ on } \partial \Omega.
    \end{cases}
\end{equation*}

So Theorem \ref{Teorema_che_scriveremo} gives us the following Saint-Venant type inequality for $T_{\mathcal{G}}(\Omega)$.
\begin{corollario}
    \label{teorema_palla_meglio2}
    Let $\Omega \subset \R^n$ be a bounded open set with finite perimeter and let $\Omega^{\sharp}$ be the centered ball. If $m > 0$, then
	\begin{equation*}
		T_\mathcal{G}(\Omega,m) \leq T_\mathcal{G}(\Omega^{\sharp},m).
	\end{equation*}
\end{corollario}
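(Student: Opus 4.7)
The approach is to apply Theorem \ref{Teorema_che_scriveremo} directly to an arbitrary competitor $\psi\in H^1(\Omega)$ for $1/T_\mathcal{G}(\Omega,m)$, producing a radial competitor on $\Omega^\sharp$ whose $\mathcal{G}$-value is no larger than that of $\psi$. Taking the infimum over $\psi$ will then yield the reverse inequality between the minima. The extra twist compared to Corollary \ref{teorema_palla_meglio} is that the admissible $\psi$ need not vanish on $\partial\Omega$, so the jump piece $D^s u$ concentrated on $\partial\Omega$ will have to encode the boundary contribution in $\mathcal G$.

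First I would replace $\psi$ with $|\psi|$, which leaves all three integrands defining $\mathcal{G}$ unchanged since $|\nabla|\psi||=|\nabla\psi|$ a.e.\ for $H^1$ functions, and so assume $\psi\geq 0$. Next I extend $\psi$ by zero outside $\Omega$ and call the resulting function $u\in\BV_0(\Omega)$. Since $\psi\in H^1(\Omega)$ has no Cantorian part, the gradient decomposition of $u$ reads $\nabla^a u=\nabla\psi\,\chi_\Omega$, while the singular part is a jump measure concentrated on $\partial\Omega$ whose height is the inner trace of $\psi$; in particular
\[
    |D^s u|(\R^n)=\int_{\partial\Omega}\psi\,d\mathcal{H}^{n-1}.
\]

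I then invoke the construction \eqref{eq_che_risolve_u_picche} to obtain the radial function $u^\star$, which is constant on $\partial\Omega^\sharp$ with boundary value $c:=|D^s u|(\R^n)/\text{Per}(\Omega^\sharp)$ and satisfies $|\nabla u^\star|=|\nabla^a u|_\sharp$. Equimeasurability of $|\nabla u^\star|$ with $|\nabla\psi|$ gives
\[
    \int_{\Omega^\sharp}|\nabla u^\star|^2\,dx=\int_\Omega|\nabla\psi|^2\,dx,
\]
so in particular $u^\star\in H^1(\Omega^\sharp)$ is admissible for $\mathcal{G}$ on $\Omega^\sharp$. The constancy of $u^\star$ on $\partial\Omega^\sharp$ yields $\int_{\partial\Omega^\sharp}u^\star\,d\mathcal{H}^{n-1}=c\cdot\text{Per}(\Omega^\sharp)=\int_{\partial\Omega}\psi\,d\mathcal{H}^{n-1}$. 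Hence the numerator of $\mathcal{G}(u^\star)$ coincides with that of $\mathcal{G}(\psi)$, while by Theorem \ref{Teorema_che_scriveremo} the denominator satisfies $\|u^\star\|_{L^1(\Omega^\sharp)}\geq\|\psi\|_{L^1(\Omega)}$. Therefore $\mathcal{G}(u^\star)\leq\mathcal{G}(\psi)$, and passing to the infimum in $\psi$ gives $1/T_\mathcal{G}(\Omega^\sharp,m)\leq 1/T_\mathcal{G}(\Omega,m)$, which is the claim.

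The main technical point is the identification $|D^s u|(\R^n)=\int_{\partial\Omega}\psi\,d\mathcal{H}^{n-1}$: extending an $H^1$ function by zero must produce a purely jump-type singularity of the claimed mass. Under the assumption that $\Omega$ has finite perimeter (with the inner trace of $\psi$ well defined on the essential boundary, as tacitly used when writing $\mathcal{G}(\psi)$), this is the standard jump-part formula from \cite{Ambrosio_Fusco_Pallara}, applied with outer trace equal to zero.
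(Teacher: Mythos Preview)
Your proposal is correct and follows exactly the paper's approach: apply Theorem~\ref{Teorema_che_scriveremo} to the zero-extension of $|\psi|$, obtaining the radial competitor $\ov\psi=u^\star$ on $\Omega^\sharp$ whose $\mathcal G$-numerator matches that of $\psi$ (by equimeasurability of the gradients and the identification $|D^s u|(\R^n)=\int_{\partial\Omega}|\psi|\,d\mathcal H^{n-1}$) and whose $L^1$-norm is at least $\|\psi\|_{L^1}$. The paper's own proof is a two-line invocation of Theorem~\ref{Teorema_che_scriveremo} that leaves these details implicit; you have unpacked precisely what is needed.
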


\begin{proof}
    For every function $\psi \in H^{1}(\Omega)$, by \ref{Teorema_che_scriveremo}, there exists $\ov{\psi} \in H^{1}(\Omega^{\sharp})$ that satisfies
    \[
        \mathcal{G}(\psi) \geq \mathcal{G}(\ov{\psi}) \geq \frac{1}{T_\mathcal{G}(\Omega^{\sharp},m)}
    \]
    and then
    \[
        T_\mathcal{G}(\Omega,m) \leq T_\mathcal{G}(\Omega^{\sharp},m).
    \]
\end{proof}

\noindent \textbf{Competing Interests} The authors have no competing interests as defined by Springer, or other interests that might be perceived to influence the results and/or discussion reported in this paper.

\vspace{0.3 cm}

\noindent \textbf{Funding Information} This work has been partially supported by the PRIN project (Italy) Grant: “Direct and inverse problems for partial differential equations: theoretical aspects and applications” and by GNAMPA of INdAM.

 \vspace{0.3 cm}

\noindent \textbf {Author contribution} All authors  equally  contributed to  the paper.

 \vspace{0.3 cm}
 
\noindent \textbf{Acknowledgements} We would like to thank the anonymous Referees for their valuable comments and suggestions.

\addcontentsline{toc}{chapter}{Bibliografia}

\printbibliography[heading=bibintoc, title={References}]

\end{document}